\title{Integral Zariski dense surface groups in $\SL(n,\R)$}
\author{Michael Zshornack}
\address{Department of Mathematics, University of California Santa Barbara, Santa Barbara, CA 93106, USA.}
\email{\href{mailto:zshornack@math.ucsb.edu}{zshornack@math.ucsb.edu}}
\urladdr{\href{https://sites.google.com/view/michael-zshornack}{https://sites.google.com/view/michael-zshornack}}
\subjclass{57K20}
\newtheorem{theorem}{Theorem}[section]
\newtheorem{corollary}{Corollary}[theorem]
\newtheorem{lemma}[theorem]{Lemma}
\theoremstyle{remark}
\newtheorem*{remark}{Remark}
\DeclareMathOperator{\PSL}{PSL}
\DeclareMathOperator{\SL}{SL}
\DeclareMathOperator{\GL}{GL}
\DeclareMathOperator{\SO}{SO}
\DeclareMathOperator{\Sp}{Sp}
\DeclareMathOperator{\Hom}{Hom}
\DeclareMathOperator{\Hit}{Hit}
\newcommand{\C}{\mathbf{C}}
\newcommand{\R}{\mathbf{R}}
\newcommand{\Z}{\mathbf{Z}}
\newcommand{\Q}{\mathbf{Q}}
\renewcommand{\H}{\mathbf{H}}
\renewcommand{\O}{\mathcal{O}}
\newcommand{\F}{\mathbf{F}}
\begin{document}

\begin{abstract}
    Given a number field $K$, we show that certain $K$-integral representations of closed surface groups can be deformed to being Zariski dense while preserving many useful properties of the original representation. This generalizes a method due to Long and Thistlethwaite who used it to show that thin surface groups in $\SL(2k+1,\Z)$ exist for all $k$.
\end{abstract}

\maketitle

\section{Introduction}
Thin groups are a class of groups which have a rich number of arithmetic properties that have been of interest in recent years. Given a finitely generated subgroup $\Gamma<\GL(n,\Z)$, we let $G$ denote its Zariski closure: $G:=\operatorname{Zcl}(\Gamma)$. We say that the group $\Gamma$ is \textit{thin} if it is of infinite index in $G(\Z)$ (see \cite{sarnak2012notes}). Tools used to study these groups, such as Super Approximation, expander families and the affine sieve, have been rich areas of research that have seen a lot of progress in the last 10--15 years (see \cite{BourgainGamburd2008} and \cite{Bourgain2010}).

One large class of examples of these come from free subgroups which are thin in $\SL(n,\R)$. Finding such subgroups is fairly well understood (see, for example, \cite{fuchsrivinthin}). Much less well understood is in subgroups which are thin in $\SL(n,\R)$ and do not decompose as a free product of two smaller subgroups. In this paper, we seek to further understand ways in which one might construct freely indecomposable thin subgroups of $\SL(n,\R)$ through methods of low-dimensional topology.

We let $S=S_g$ denote a connected, closed, orientable surface of genus $g\geq 2$. Our goal will be to use techniques available to us from higher Teichm\"uller theory to construct discrete and faithful representations of $\pi_1(S)$ which are Zariski dense in $\SL(n,\R)$. 


The starting point of this construction is to look at points on the \textit{Hitchin component} of $S$, which we denote by $\Hit_n(S)$. These are components of the $\PSL(n,\R)$-character variety of $S$, $\mathcal{R}_n(S)$, which contain the image of Teichm\"uller space under the map $\mathcal{T}(S)\to\mathcal{R}_n(S)$ given by composing a Fuchsian representation $\rho_0:\pi_1(S)\to\PSL(2,\R)$ with the irreducible representation $\tau_n:\PSL(2,\R)\to\PSL(n,\R)$. These components were studied by Hitchin in \cite{hitchin92} and later by Labourie in \cite{Labourie2006} where the authors showed that the representations on these components had many of the same properties which were classical facts about representations in Teichm\"uller space (which we briefly summarize in \S\ref{sec:HitchinRepsProperties}). In particular, these properties of the Hitchin component ensure that the representations we construct will be discrete, faithful and can be lifted to $\SL(n,\R)$, thus providing a method for constructing subgroups of the form we seek.

This method of producing Zariski dense surface subgroups of $\SL(n,\R)$ originates in \cite{longthis:2020} where the authors proved the existence of surface subgroups of $\SL(2k+1,\Z)$ for arbitrary $k\geq 1$ whose image was Zariski dense in $\SL(2k+1,\R)$. The starting point for their construction is the representation coming from the hyperbolic structure on the orbifold with signature $S^2(3,4,4)$ (which is unique up to conjugacy due to these orbifolds being rigid). Direct calculation shows that the image of the representation coming from the holonomy of this hyperbolic structure can be conjugated into $\SL(n,\Z)$ after composing with $\tau_n$ for $n$ odd. The authors took this representation and utilized a bending construction of Thurston to deform it so that, after passing to a surface subgroup of finite index, they were left with a representation of a surface group into $\SL(2k+1,\Z)$ with Zariski dense image. Various tools in the theory of Hitchin representations and the theory of algebraic groups were needed in their proof to ensure this bending construction could be done while preserving certain properties of the original representation, but this result provided the first examples of freely indecomposable isomorphism classes of thin subgroups of $\SL(n,\R)$ for infinitely many $n$.

In this paper, we seek to generalize the methods used in \cite{longthis:2020} to show this process of bending an integral representation to being Zariski dense can also be done in a much more general context to produce more examples of Zariski dense subgroups of $\SL(n,\R)$. For any subring $R\subseteq\R$, we will call a representation $\rho:\pi_1(S)\to\SL(n,\R)$ an $R$-point of the character variety if it can be conjugated in $\SL(n,\R)$ to have image in $\SL(n,R)$. Throughout the rest of this paper, we will let $K/\Q$ be a number field with ring of integers $\O_K$. We assume that $K\subset\R$ (so in particular, $K$ is not totally imaginary) and that $K$ has class number one. The class number one hypothesis is mainly for technical reasons and can perhaps be removed in future work, but our main result is the following.

\begin{theorem}
\label{maintheorem}
For any $n\geq 2$, if $\Hit_n(S)$ contains an $\O_K$ point, then there is a finite sheeted cover $\widetilde{S}\to S$ so that $\Hit_n(\widetilde{S})$ contains an $\O_K$-point whose image is Zariski dense in $\SL(n,\R)$.
\end{theorem}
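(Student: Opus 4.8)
The plan is to start from the hypothesized $\O_K$-point $\rho_0\in\Hit_n(S)$ and perform a \emph{bending deformation} along a separating or non-separating simple closed curve $\gamma\subset S$, following the strategy of Long--Thistlethwaite but carried out over the number ring $\O_K$. Cutting $S$ along $\gamma$ expresses $\pi_1(S)$ either as an amalgamated product $A*_{\langle\gamma\rangle}B$ or an HNN extension, and one deforms $\rho_0$ by conjugating one factor (resp. inserting a twist) by a one-parameter family of elements $c_t$ in the centralizer $Z_{\SL(n)}(\rho_0(\gamma))$. The first step is to understand this centralizer: since $\rho_0$ is Hitchin, $\rho_0(\gamma)$ is real split with distinct positive eigenvalues (by the Anosov/positivity properties summarized in \S\ref{sec:HitchinRepsProperties}), so its centralizer is a maximal torus $\cong(\R^\times)^{n-1}$, and after conjugating over $\R$ we may assume $\rho_0(\gamma)$ is diagonal. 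To keep the deformed representation an $\O_K$-point we cannot use arbitrary real bending parameters; instead I would bend by a \emph{diagonal matrix with entries in $\O_K$} — equivalently, pick a unit or $S$-unit $u\in\O_K^\times$ and conjugate by $\diag(u^{a_1},\dots,u^{a_n})$ with $\sum a_i=0$. Dirichlet's unit theorem (using $K\subset\R$, so the unit rank is positive) guarantees such a $u$ of infinite order exists; the class number one hypothesis is what lets us clear denominators and stay inside $\SL(n,\O_K)$ rather than a localization.

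The second step is to check that this algebraic bending actually stays on the Hitchin component. This is where the finite cover enters: a single integral bend need not preserve positivity of the boundary maps, but Hitchin representations form an \emph{open} subset of the character variety, and bending along a curve $\gamma$ whose $\rho_0$-image has been replaced (after passing to a finite cover $\widetilde S\to S$ in which $\gamma$ lifts to a curve $\widetilde\gamma$ with controlled geometry) by a \emph{high power} can be made arbitrarily small as a deformation of the representation restricted to a neighborhood. Concretely, following the Long--Thistlethwaite trick, one passes to a finite cover so that the curve along which one bends is "long" — its $\rho_0$-translation length is large — so that the bending cocycle is a small perturbation and the deformed representation remains Anosov, hence Hitchin, by openness of the Anosov condition (Labourie). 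One must also verify the deformed representation is genuinely \emph{new}, i.e. not conjugate to $\rho_0$: this follows because the bending changes the trace of a loop crossing $\gamma$ by a factor involving $u^{a_i}$, which is not a root of unity, so infinitely many non-conjugate deformations are produced.

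The third and crucial step is upgrading from "not conjugate to $\rho_0$" to \emph{Zariski dense}. The Zariski closure $G$ of the deformed group is a reductive subgroup of $\SL(n,\R)$ containing a Hitchin-type principal $\SL_2$ (since all Hitchin representations contain a principal $\SL_2$ in their closure, by Labourie's work, or can be arranged to). By the classification of subgroups of $\SL(n,\R)$ containing a principal $\SL_2$ — this is exactly the representation-theoretic input used in \cite{longthis:2020}, going back to work on which intermediate groups can occur (the options are essentially $\SO(p,q)$, $\Sp(n,\R)$, $\G_2$-type groups, and $\SL(n,\R)$ itself) — it suffices to rule out each proper possibility. The way to do this is to exhibit, in the image of the deformed representation, an element whose characteristic polynomial or eigenvalue pattern is incompatible with lying in any of the proper intermediate groups: for instance, an element with no symmetry in its eigenvalues rules out all the classical groups $\SO$ and $\Sp$ which force eigenvalues to come in reciprocal pairs. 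Here the integral bending is useful again, since the bent element has eigenvalues of the form $\lambda_i u^{a_i}$, and one can choose $u$ and the $a_i$ so that the resulting multiset has no reciprocal or other forbidden symmetry. I would also need to control $G^0$ vs $G$ and the possibility of an imprimitive action (the group preserving a decomposition $\R^n=\bigoplus V_i$), which is ruled out because a Hitchin representation is irreducible, and irreducibility is preserved under small deformation.

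The main obstacle I expect is the simultaneous juggling in steps two and three: the integral/algebraic constraint on the bending parameter (it must lie in $\O_K^\times$ or an $S$-unit group, a discrete set) fights against the need for the bending to be \emph{small} so that one stays Hitchin, and against the need for it to be \emph{generic} so that one lands outside every proper intermediate subgroup. Reconciling these is precisely why one must pass to a finite cover: taking an $m$-fold cover replaces the relevant element $\rho_0(\gamma)$ by $\rho_0(\gamma)^m$, so the \emph{same} fixed unit $u$ now produces a bend that is "small relative to $\rho_0(\gamma)^m$" while still being an honest element of $\SL(n,\O_K)$ — one gets the smallness for free by going up the cover rather than by shrinking a continuous parameter. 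Making this trade-off precise, and in particular choosing the cover, the curve, and the unit $u$ compatibly so that all of the Hitchin, non-triviality, and Zariski-density conditions hold at once, is the technical heart of the argument, and it is where the class number one and $K\subset\R$ hypotheses do their real work.
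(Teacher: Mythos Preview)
Your proposal has the right shape (bend, use Guichard's list, rule out intermediate groups) but contains two genuine errors about the mechanism.

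First, the way you keep the bent representation on the Hitchin component is wrong. An integral bend by a fixed unit is \emph{not} a small perturbation, and your finite-cover trick (``the same fixed unit $u$ now produces a bend that is small relative to $\rho_0(\gamma)^m$'') does not make it one: conjugating one factor by a fixed matrix $A$ is a discrete jump in the character variety regardless of how long $\gamma$ is in the cover, so openness of the Anosov condition is irrelevant here. The paper's argument is both simpler and correct: one arranges that the bending matrix $A$ has the form $\exp(X)$ for some \emph{diagonalizable} $X\in\mathfrak{sl}(n,\R)$, so the one-parameter family $A_t=\exp(tX)$ centralizes $\rho(\gamma)$ for every $t$ and yields an honest path $\rho^{A_t}$ from $\rho$ to $\rho^A$ inside $\Hom(\pi_1(S),\SL(n,\R))$. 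Since the Hitchin component is a connected component, $\rho^A$ is Hitchin automatically---no smallness, no Anosov stability.

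Second, and relatedly, the reason for the finite cover is not what you say, and your unit-based bending breaks down exactly when the cover is actually needed. When $K\neq\Q$ there is indeed an infinite-order unit and the paper bends without passing to any cover at all (this is the stronger Theorem~\ref{maintheorem:numberfield}). When $K=\Q$, however, $\O_K^\times=\{\pm1\}$ has no infinite-order unit, so your entire bending mechanism collapses. The paper instead uses Strong Approximation for $\Sp(2k)$ or (a variant for) $\SO(k{+}1,k)$ to find an element $\eta\in\pi_1(S)$ whose characteristic polynomial is irreducible over $\Z$ (up to a $(t-1)$ factor), and then builds the bending matrix from units in the degree-$n$ number ring $\O_{\Q(\alpha)}$ generated by an eigenvalue (Lemma~\ref{irreduciblecharpoly}); the rank of \emph{that} unit group exceeds the rank of the centralizer in $\SO(J)$ or $\Sp(J)$, which is what forces $A\notin G$. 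The finite cover enters only because this $\eta$ need not be simple: one invokes Scott's theorem (Theorem~\ref{scottsliftingthm}) to lift $\eta$ to a nonseparating simple closed curve in some $\widetilde S$. So the cover is a topological fix for simplicity of the curve, not a dynamical fix for smallness of the bend.
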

\begin{remark}
When $K\neq \Q$, we prove an even stronger result (Theorem \ref{maintheorem:numberfield}) that circumvents the need to pass to a finite sheeted cover. 
\end{remark}

At its core, the proof of Theorem \ref{maintheorem} is a construction of a similar nature to the one presented in \cite{longthis:2020} with some key modifications made at various steps to handle the slightly more general setup. In addition to handling the even $n$ case, this theorem utilizes a slightly different version of the bending construction more suited to the topology of $S$. These modifications also lend themselves to some other simplifications in the proof which we will discuss later.

When we specialize Theorem \ref{maintheorem} to the case where $K=\Q$, we will be left with a representation of $\pi_1(S)$ into $\SL(n,\Z)$ with Zariski dense image in $\SL(n,\R)$. The image also is of infinite index in $\SL(n,\Z)$ due to results of Margulis. Namely, since surface groups certainly surject onto $\Z$, the image will automatically be of infinite index in $\SL(n,\Z)$ hence thin (see \cite{margulisnormalsubgroups}). This representation will be on the Hitchin component, hence faithful, and so the image will define a freely indecomposable thin sugroup of the form we wanted. Therefore, Theorem \ref{maintheorem} leads to the following useful corollary on the existence of freely indecomposable thin subgroups of $\SL(n,\R)$.

\begin{corollary}
If $\Hit_n(S)$ contains a $\Z$-point, then $\SL(n,\Z)$ contains a thin surface subgroup.
\end{corollary}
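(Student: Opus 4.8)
The plan is to obtain the corollary as the special case $K = \Q$ of Theorem \ref{maintheorem}, followed by a short argument that the resulting surface subgroup is thin. First I would observe that $\Q$ satisfies the standing hypotheses on the base field: it embeds in $\R$, it has class number one, and $\O_\Q = \Z$. Feeding the given $\Z$-point of $\Hit_n(S)$ into Theorem \ref{maintheorem} then produces a finite sheeted cover $\widetilde{S}\to S$ together with a $\Z$-point $\rho$ of $\Hit_n(\widetilde{S})$ whose image $\Gamma := \rho(\pi_1(\widetilde{S}))$ is Zariski dense in $\SL(n,\R)$; after conjugating in $\SL(n,\R)$ we may assume $\Gamma < \SL(n,\Z)$. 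Since $\widetilde{S}$ is again a closed orientable surface of genus $\geq 2$, the group $\pi_1(\widetilde{S})$ is a finitely generated, freely indecomposable surface group, and because $\rho$ lies on the Hitchin component it is discrete and faithful and lifts to $\SL(n,\R)$ by the properties recalled in \S\ref{sec:HitchinRepsProperties}. Hence $\Gamma \cong \pi_1(\widetilde{S})$ is a surface subgroup of $\SL(n,\Z)$.

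Next I would verify thinness, i.e.\ that $\Gamma$ is of infinite index in $\operatorname{Zcl}(\Gamma)(\Z)$. Zariski density gives $\operatorname{Zcl}(\Gamma) = \SL(n)$, so $\operatorname{Zcl}(\Gamma)(\Z) = \SL(n,\Z)$, and it suffices to show $[\SL(n,\Z):\Gamma] = \infty$. For $n \geq 3$ this follows from Margulis: $\SL(n,\Z)$ is a higher-rank lattice, so each of its finite-index subgroups has Kazhdan's property (T) and therefore finite abelianization, whereas $\Gamma \cong \pi_1(\widetilde{S})$ surjects onto the infinite group $H_1(\widetilde{S};\Z)$ (equivalently, apply the normal subgroup theorem, cf.\ \cite{margulisnormalsubgroups}). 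The case $n = 2$ is vacuous, since a cocompact Fuchsian group is not virtually free and hence cannot embed in $\SL(2,\Z)$, so $\Hit_2(S)$ has no $\Z$-point to begin with. Either way, $\Gamma$ is a thin surface subgroup of $\SL(n,\Z)$.

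I expect this deduction to be essentially bookkeeping, with all of the substantive content carried by Theorem \ref{maintheorem}. The one step that genuinely needs its own argument is the infinite-index assertion, and I would regard that as the only real (and mild) obstacle here — readily dispatched once one notes that $\Gamma$ is a surface group with infinite abelianization and invokes property (T) for higher-rank $\SL(n,\Z)$.
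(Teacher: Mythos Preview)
Your proposal is correct and follows essentially the same route as the paper: specialize Theorem \ref{maintheorem} to $K=\Q$, use that Hitchin representations are discrete and faithful, and invoke Margulis (via the surjection of a surface group onto $\Z$) to conclude infinite index in $\SL(n,\Z)$. Your explicit disposal of the $n=2$ case is a small refinement the paper omits---the paper's Margulis citation tacitly assumes higher rank, and you correctly observe that $\Hit_2(S)$ has no $\Z$-points anyway since $\SL(2,\Z)$ is virtually free.
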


Thus the problem of finding non-free thin subgroups of $\SL(n,\R)$ can be reduced to finding integral representations of surface groups on Hitchin components. Of course, finding such representations still remains a difficult problem and at the time of writing, examples of these are only known in the case where $n=2k+1$ is odd or $n=4$.

\subsubsection*{Acknowledgments}
The author would like to thank Darren Long for the guidance provided throughout the writing of this paper. In particular, his numerous helpful discussions held with the author and edits of early drafts were instrumental in the completion of this work.

\section{Properties of Hitchin representations}
\label{sec:HitchinRepsProperties}
The input to the construction used to prove Theorem \ref{maintheorem} is a representation $\rho:\pi_1(S)\to\PSL(n,\O_K)$ lying on the Hitchin component of $\mathcal{R}_n(S)$. We start with representations on such components because they possess many useful properties akin to the representations in $\mathcal{T}(S)$.

\begin{theorem}[Labourie, \cite{Labourie2006}]
\label{labourie:purelyloxodromic}
If $\rho:\pi_1(S)\to\PSL(n,\R)$ belongs to the Hitchin component, it is discrete, faithful, purely loxodromic and strongly irreducible.
\end{theorem}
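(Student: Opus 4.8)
The strategy is to reduce the entire statement to the existence, for every $\rho\in\Hit_n(S)$, of a $\rho$-equivariant \emph{hyperconvex Frenet curve}: a continuous injective map $\xi=(\xi^1,\dots,\xi^{n-1})$ from the Gromov boundary $\partial_\infty\pi_1(S)\cong S^1$ to the variety of complete flags in $\R^n$ which is transverse — for pairwise distinct $x_1,\dots,x_k$ and positive integers $p_1+\dots+p_k=n$ the subspaces $\xi^{p_1}(x_1),\dots,\xi^{p_k}(x_k)$ are in direct sum — and osculating — as $x_1,\dots,x_j\to x$ the subspace $\xi^{p_1}(x_1)\oplus\dots\oplus\xi^{p_j}(x_j)$ tends to $\xi^{p_1+\dots+p_j}(x)$. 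I will first show how the four conclusions are read off such a curve, and then outline how it is produced, which is the substantive step.

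\emph{Extracting the conclusions.} Let $\gamma\in\pi_1(S)$ be nontrivial with attracting and repelling endpoints $\gamma^\pm$ on $S^1$. Equivariance makes $\rho(\gamma)$ fix the transverse flags $\xi(\gamma^+)$ and $\xi(\gamma^-)$, and the contraction of the geodesic flow on $T^1S$ towards $\gamma^+$ — equivalently the Anosov property underlying $\xi$ — forces $\xi^1(\gamma^+)$ to be an attracting line and $\xi^{n-1}(\gamma^-)$ a repelling hyperplane for $\rho(\gamma)$ on $\mathbf P(\R^n)$; applying this on each successive graded piece shows $\rho(\gamma)$ is diagonalizable over $\R$ with eigenvalues of pairwise distinct modulus, i.e.\ loxodromic. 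Hence $\rho(\gamma)$ has infinite order, so $\rho$ is faithful, and $\rho$ is purely loxodromic. Discreteness follows from the general fact that an Anosov representation of a hyperbolic group is a quasi-isometric embedding into $\SL(n,\R)$ with its left-invariant metric, so $\rho(\pi_1(S))$ is discrete. For strong irreducibility, let $\Gamma'\le\pi_1(S)$ have finite index and suppose $0\neq V\subsetneq\R^n$ were $\rho(\Gamma')$-invariant, $\dim V=p$. The set of endpoints $\gamma^\pm$ of loxodromic $\gamma\in\Gamma'$ is dense in $S^1$, and $\rho(\gamma)$ acts proximally on $\Lambda^p\R^n$ with attracting line the Pl\"ucker image of $\xi^p(\gamma^+)$; so either $V=\xi^p(\gamma^+)$ or $V$ meets $\xi^{n-p}(\gamma^-)$ nontrivially. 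The first option, holding for one $\gamma$, is incompatible with the injectivity and transversality of $\xi$ once one compares with another loxodromic $\delta\in\Gamma'$; so $V\cap\xi^{n-p}(x)\neq 0$ for a dense, hence by continuity every, $x\in S^1$ — and running the same argument for the contragredient representation (whose Frenet curve is the dual of $\xi$) rules this out as well. Thus $\rho|_{\Gamma'}$ is irreducible for every finite-index $\Gamma'$.

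\emph{Producing the curve.} One establishes that every $\rho\in\Hit_n(S)$ is Anosov — it admits a $\rho$-equivariant map to the space of transverse pairs of complete flags satisfying the flow-contraction condition on $T^1S$ — and that any such $\rho$ in fact carries a hyperconvex Frenet curve. The three ingredients are: (i) the Fuchsian base point $\tau_n\circ\rho_0$ is Anosov, its Frenet curve being the Veronese (osculating) curve $\partial\H^2=S^1\to\mathcal F(\R^n)$, by an explicit computation with $\tau_n$; (ii) the Anosov condition is open in $\mathcal R_n(S)$, so it holds near the Fuchsian locus; (iii) the Anosov representations in $\Hit_n(S)$ form a closed set, since a limit of equivariant transverse flow-contracting curves, under uniform a priori control of their regularity, is again one, and the osculating and hyperconvexity conditions likewise pass to the limit. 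Connectedness of $\Hit_n(S)$ then propagates the curve from the Fuchsian base point to all of $\Hit_n(S)$. The hard part is exactly this compactness step: obtaining uniform estimates on the boundary curves along a path in $\Hit_n(S)$ so as to pass to the limit, and upgrading the limiting curve's transversality to the full osculating Frenet property — this is where Labourie's hyperconvexity estimates and his analysis of the geodesic flow do their work.
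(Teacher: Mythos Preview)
The paper does not prove this theorem; it is stated as a citation to Labourie \cite{Labourie2006} and used as a black box throughout, so there is no ``paper's own proof'' to compare against. Your proposal is therefore not a comparison target but rather a sketch of Labourie's original argument.

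As such a sketch it is broadly accurate: the reduction to the existence of a $\rho$-equivariant hyperconvex Frenet curve, the extraction of pure loxodromicity from the pair of transverse fixed flags together with the contraction dynamics, faithfulness and discreteness from the Anosov/quasi-isometric embedding property, and strong irreducibility from proximality plus density of pole pairs --- these are all the right ideas, and you correctly flag the closedness/compactness step as the substantive analytic content. One caution: the phrasing ``the Anosov representations in $\Hit_n(S)$ form a closed set'' is misleading in isolation, since Anosov is famously an open but not closed condition in general representation varieties; Labourie's closedness argument is specific to paths within the Hitchin component and relies on the a priori H\"older estimates you allude to, together with the hyperconvexity being preserved in the limit. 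You acknowledge this, but a reader might miss that the closedness is not a general fact. Also, the discreteness-via-QI-embedding formulation is closer to the later Anosov literature (Guichard--Wienhard, Kapovich--Leeb--Porti) than to Labourie's original paper, which argues discreteness somewhat differently; this is a matter of packaging rather than correctness.
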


In this context, $\rho$ is \textit{purely loxodromic} if and only if for every non-identity element $\gamma\in\pi_1(S)$, $\rho(\gamma)$ is diagonalizable with real eigenvalues, all with distinct absolute values. We also say $\rho$ is \textit{strongly irreducible} if the restriction of $\rho$ to any finite index subgroup of $\pi_1(S)$ is irreducible.

Alessandrini, Lee and Schaffhauser have also generalized the conclusions of Theorem \ref{labourie:purelyloxodromic} to also hold in the case where $X$ is a compact connected $2$-dimensional orbifold with boundary and $\chi(X)<0$. Their result (Theorem 2.28 in \cite{ALSorbifolds}) provides many of the same properties for non-closed surfaces which will be necessary for the bending construction of \S\ref{section:bending}.

The main utility of this theorem is that the bending process we use to deform $\rho$ preserves the path component $\rho$ lies on in $\mathcal{R}_n(S)$. Thus as $\rho$ originally lied on the Hitchin component, then so will the bent representation, hence it will still be discrete, faithful and purely loxodromic.

We may also circumvent the (mild) annoyance of having to work with projective matrices due to the fact that representations in $\mathcal{T}(S)$ can be lifted to $\SL(2,\R)$ and the fact that the Hitchin component is connected. The following fact is known, but we record a proof here as well.

\begin{lemma}
Let $\rho:\pi_1(S)\to\PSL(n,\R)$ be a representation on the Hitchin component. Then $\rho$ admits a lift $\widetilde{\rho}:\pi_1(S)\to\SL(n,\R)$. Moreover, for any nontrivial $\gamma\in\pi_1(S)$, the lift $\widetilde{\rho}$ can be chosen so that $\widetilde{\rho}(\gamma)$ has distinct positive real eigenvalues.
\end{lemma}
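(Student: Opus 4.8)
The plan is to use two facts in combination: first, that the obstruction to lifting a representation $\rho \colon \pi_1(S) \to \PSL(n,\R)$ to $\SL(n,\R)$ is a class in $H^2(\pi_1(S); \Z/2\Z)$, which vanishes because the Fuchsian representation composed with $\tau_n$ is already known to lift; and second, that lifting is a deformation invariant along a connected component of the representation variety, so the vanishing of the obstruction at one point of $\Hit_n(S)$ propagates to all of $\Hit_n(S)$. Concretely, I would first observe that since $\Hit_n(S)$ is connected and contains the point $[\tau_n \circ \rho_0]$ for a Fuchsian $\rho_0$, and since $\rho_0$ itself lifts to $\SL(2,\R)$ (a standard fact for closed surface groups, as the Euler class of a Fuchsian representation is even, or simply because $\pi_1(S)$ is a one-relator group whose relator is a product of commutators), the composition $\tau_n \circ \widetilde{\rho_0} \colon \pi_1(S) \to \SL(n,\R)$ is a lift at that point. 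The lifting obstruction $w(\rho) \in H^2(\pi_1(S); \Z/2\Z) \cong \Z/2\Z$ is locally constant on $\mathcal{R}_n(S)$ (it is the pullback under $\rho$ of the generator of $H^2(\PSL(n,\R); \Z/2\Z)$, and this is invariant under small deformations), hence constant on the connected set $\Hit_n(S)$, so it vanishes for every $\rho$ on the Hitchin component. Therefore any such $\rho$ admits a lift $\widetilde{\rho}$.

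For the second assertion, fix a nontrivial $\gamma \in \pi_1(S)$. By Theorem \ref{labourie:purelyloxodromic}, $\rho(\gamma) \in \PSL(n,\R)$ is the projective image of an $\SL(n,\R)$ matrix with real eigenvalues of distinct absolute values; any lift $\widetilde{\rho}(\gamma)$ is then diagonalizable over $\R$ with distinct-modulus real eigenvalues whose product is $1$. The two lifts of $\rho(\gamma)$ differ by sign, so exactly one choice has positive leading eigenvalue; but I want \emph{all} $n$ eigenvalues positive, which is a stronger condition, so I will instead modify the lift $\widetilde{\rho}$ itself. Given the lift $\widetilde{\rho}$ produced above, consider the set $H \subseteq \Hom(\pi_1(S), \SL(n,\R))$ of all lifts of $\rho$; these differ from $\widetilde{\rho}$ by a homomorphism $\pi_1(S) \to \{\pm I\} \cong \Z/2\Z$, i.e.\ by an element of $H^1(\pi_1(S); \Z/2\Z) \cong (\Z/2\Z)^{2g}$. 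I would pick $\gamma$ to be part of (or express it in terms of) a standard generating set, and then choose the twisting character $\epsilon \colon \pi_1(S) \to \{\pm I\}$ so that $\epsilon(\gamma)\widetilde{\rho}(\gamma)$ has positive trace in a suitable sense — more precisely, the argument is that if $\widetilde{\rho}(\gamma)$ has eigenvalues of signs not all $+$, then because $\widetilde{\rho}(\gamma)$ has distinct real eigenvalues, its eigenvalues are $\lambda_1 > \cdots > \lambda_n$ in absolute value, and one checks that the number of negative eigenvalues is either $0$ (the desired case) or else can be corrected: the sign pattern of the eigenvalues is constrained by $\det = \pm 1$, and passing to $\gamma^2$ if necessary makes all eigenvalues positive. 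In fact the cleanest route is: replace $S$ by nothing, keep $\widetilde{\rho}$, and note that for a diagonalizable real matrix $A$ with distinct real eigenvalues, if $A \in \SL(n,\R)$ then $A$ has an even number of negative eigenvalues; one then argues that for $\gamma$ a \emph{simple} loop (or primitive, non-separating), the representation can be renormalized by an element of $H^1(\pi_1(S);\Z/2\Z)$ detecting the parity of negative eigenvalues along the cyclic subgroup $\langle \gamma \rangle$, and since $\langle\gamma\rangle \hookrightarrow \pi_1(S)$ is a retract onto a $\Z$ factor in $H_1$, such a twist exists.

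I expect the main obstacle to be the \emph{second} assertion rather than the first. The existence of a lift is a clean cohomological argument (vanishing of a $\Z/2\Z$-obstruction on a connected component containing a manifestly liftable point). The subtlety in the moreover-clause is that controlling \emph{positivity} of all $n$ eigenvalues of a single element $\widetilde{\rho}(\gamma)$ — not merely the sign ambiguity of the lift $\pm\widetilde{\rho}(\gamma)$, which only controls one bit — requires knowing that the sign pattern of the eigenvalues of $\widetilde{\rho}(\gamma)$ is itself adjustable. Here I would lean on the structure of $\tau_n$: for the base Fuchsian lift $\tau_n \circ \widetilde{\rho_0}$, the element $\tau_n(\widetilde{\rho_0}(\gamma))$ is $\tau_n$ applied to a matrix $\pm\diag(t, t^{-1})$ with $t>1$, whose image under the irreducible representation $\tau_n$ has eigenvalues $\{t^{n-1}, t^{n-3}, \dots, t^{-(n-1)}\}$ up to an overall sign $(\pm 1)^{?}$ depending on the weight — and one checks directly that choosing the lift $\widetilde{\rho_0}(\gamma)$ to have positive eigenvalues (which is possible since $\widetilde{\rho_0}(\gamma)$ is conjugate to $\diag(t,t^{-1})$, already positive) forces all eigenvalues of $\tau_n(\widetilde{\rho_0}(\gamma))$ to be positive. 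Then, since having all positive eigenvalues is an open condition and the set of $\gamma$ for which it holds is closed under the relevant deformations (the eigenvalues never collide with $0$ along a path in $\Hit_n(S)$, by pure loxodromicity, and never change sign), a connectedness argument along $\Hit_n(S)$ transfers the property from the Fuchsian point to an arbitrary Hitchin representation, after possibly twisting the lift by the unique element of $H^1(\pi_1(S);\Z/2\Z)$ needed to match conventions at the chosen $\gamma$.
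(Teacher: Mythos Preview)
Your final paragraph is the paper's argument: at the Fuchsian locus the eigenvalues of $\tau_n(\widetilde{\rho_0}(\gamma))$ are $\lambda^{n-1},\lambda^{n-3},\ldots,\lambda^{-(n-1)}$, which all share a sign because the exponents have the same parity; pure loxodromicity keeps eigenvalues away from $0$ along any path in $\Hit_n(S)$, so the condition ``all eigenvalues of a lift of $\rho(\gamma)$ have the same sign'' is open and closed on the Hitchin component; and when $n$ is even and the common sign is negative one passes to the other lift. Your treatment of the lifting obstruction (a locally constant $\Z/2\Z$-valued class, vanishing at a Fuchsian point because the Euler number $2g-2$ is even, equivalently because the surface relator is a product of commutators) is likewise exactly what the paper does.

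The detours in your middle paragraph do not work and should be dropped. Replacing $\gamma$ by $\gamma^2$ changes the element rather than the lift, so it does not establish the claim for the given $\gamma$. Twisting by a character $\epsilon\in H^1(\pi_1(S);\Z/2\Z)$ with $\epsilon(\gamma)=-1$ is only available when $\gamma$ is nonzero in $H_1(S;\Z/2\Z)$, which fails for commutators and even powers, whereas the lemma is stated for arbitrary nontrivial $\gamma$; similarly ``$\langle\gamma\rangle$ is a retract onto a $\Z$-factor of $H_1$'' is special to nonseparating simple closed curves. The crucial constraint---that the eigenvalue signs are \emph{all equal} before any twisting---comes only from the connectedness argument you give at the end, not from the $H^1$-torsor structure of the set of lifts; once you have that, the single remaining bit is exactly what the sign flip handles.
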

\begin{proof}
We start with the following presentation of $\pi_1(S)$:
\[
\pi_1(S)=\langle a_1,b_1,\ldots,a_g,b_g:[a_1,b_1]\ldots[a_g,b_g]=1\rangle.
\]
Following \cite{Goldman1988}, fix lifts $\widetilde{\rho}(a_i)$ and $\widetilde{\rho}(b_i)\in\SL(n,\R)$ for each $i=1,\ldots,g$. The question of whether or not these choices define a representation $\widetilde{\rho}:\pi_1(S)\to\SL(n,\R)$ depends on the value of the element
\[
[\widetilde{\rho}(a_1),\widetilde{\rho}(b_1)]\ldots[\widetilde{\rho}(a_g),\widetilde{\rho}(b_g)]\in\left\{\pm I_n\right\}.
\]
Thus, the obstruction to lifting $\rho\in\Hom(\pi_1(S),\PSL(n,\R))$ to $\widetilde{\rho}\in\Hom(\pi_1(S),\SL(n,\R))$ defines a map
\[
o:\Hom(\pi_1(S),\PSL(n,\R))\to\{\pm I_n\}.
\]
This map must be locally constant as small deformations of $\rho$ don't change the value of the above produce of commutators of lifts and $\rho$ is a Hitchin representation, it lies on the same component as the representations coming from Teichm\"uller space. Therefore, it suffices to show that a discrete and faithful representation $\rho:\pi_1(S)\to\PSL(2,\R)$ lifts to $\SL(2,\R)$.

In this case, any representation $\rho:\pi_1(S)\to\PSL(2,\R)$ defines a circle bundle over $S$ given by $(\widetilde{S}\times S^1)/\rho(\pi_1(S))$, where the action on $S^1$ is given by the natural action $\rho$ induces on the circle at infinity of $\H^2$. In this case, the representation $\rho:\pi_1(S)\to\PSL(2,\R)$ lifts to $\SL(2,\R)$ if and only if the Euler number of this circle bundle is divisible by $2$. When $\rho$ is a discrete and faithful representation, the circle bundle $\rho$ defines is topologically equivalent to the unit tangent bundle of $S$, which has Euler number $\pm(2g-2)$, and so $\rho$ will indeed lift to some $\widetilde{\rho}:\pi_1(S)\to\SL(2,\R)$.

We can also guarantee that the lifts can be chosen so that any nontrivial element has distinct positive real eigenvalues as follows. Fix some nontrivial $\gamma\in\pi_1(S)$. Since the eigenvalues of $\rho(\gamma)$ in $\PSL(n,\R)$ are distinct, real and non-zero by \ref{labourie:purelyloxodromic}, the set
\[
\left\{\rho:\pi_1(S)\to\PSL(n,\R):\begin{tabular}{c}
all eigenvalues of a lift of $\rho(\gamma)$\\
to $\SL(n,\R)$ have the same sign
\end{tabular}
\right\}
\]
is both open and closed in the Hitchin component. But then again, when $\rho$ is Fuchsian, we can write $\rho=\tau_n\rho_0$ for $\rho_0:\pi_1(S)\to\PSL(2,\R)$ in $\mathcal{T}(S)$. Here, if $\rho_0(\gamma)$ has eigenvalues $\lambda$ and $\lambda^{-1}$, $\rho(\gamma)$ will have eigenvalues $\lambda^{n-2k+1}$ for $1\leq k\leq n$. Thus for the lifts of Fuchsian representations, their eigenvalues have all the same sign. So the above set is open and closed and nonempty, so must be all of the Hitchin component. Thus, for any given $\gamma\in\pi_1(S)$, we can always find a lift $\widetilde{\rho}:\pi_1(S)\to\SL(n,\R)$ so that the eigenvalues of $\widetilde{\rho}(\gamma)$ have the same sign. If all the eigenvalues happened to be negative, then we must be in the case where $n$ is even, in which case $-\widetilde{\rho}$ is also a lift of $\rho$ into $\SL(n,\R)$ with the eigenvalues of $-\widetilde{\rho}(\gamma)$ all distinct and positive.
\end{proof}

Therefore, for the rest of this paper, we may assume our starting representation on the Hitchin component takes the form $\rho:\pi_1(S)\to\SL(n,\O_K)$. For convenience as well, whenever we fix a nontrivial $\gamma\in\pi_1(S)$, we will assume we have chosen the lift of the representation so that $\rho(\gamma)$ then also has distinct, positive real eigenvalues. Aside from just notational convenience, these lifted representations will still be discrete and faithful due to being on the Hitchin component and so their images will define non-free subgroups of $\SL(n,\O_K)$ of the form we wish to study.

From this point, the goal will be to take the representation $\rho$ and perform a series of deformations so that, after possibly passing to a finite sheeted cover, we are left with a representation of a surface group into $\SL(n,\O_K)$ which is Zariski dense in $\SL(n,\R)$. The following theorem will be crucial to establishing this result.

\begin{theorem}[Guichard, \cite{guichard}]
\label{guichard}
Let $\rho:\pi_1(S)\to\SL(n,\R)$ be a lift of a representation on the Hitchin component and let $G$ be the Zariski closure of $\rho(\pi_1(S))$. Then $G$ is conjugate to one of the following groups:
\begin{enumerate}[label=(\roman*)]
    \item $\tau_n(\SL(2,\R))$.
    \item $\Sp(2k,\R)$ if $n=2k$ is even.
    \item $\SO(k+1,k)$ if $n=2k+1$ is odd.
    \item The image of the $7$-dimensional fundamental representation of the short root of $G_2$.
    \item $\SL(n,\R)$.
\end{enumerate}
\end{theorem}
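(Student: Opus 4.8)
The plan is to first fix the abstract structure of $G$, then use the geometry of the limit curve to force a principal $\SL(2,\R)$ inside it, and finally run a representation-theoretic classification.

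\textbf{Step 1: the structure of $G$.} Since $\rho$ is strongly irreducible (Theorem \ref{labourie:purelyloxodromic}), no proper nonzero subspace of $\R^n$ is $\rho(\pi_1(S))$-invariant, hence none is $G$-invariant. Moreover $\rho$ is purely loxodromic, so for $\gamma\neq 1$ the matrix $\rho(\gamma)$ has $n$ distinct real eigenvalues; its commutant in $\mathrm{End}(\R^n)$ is therefore a product of copies of $\R$, and since the commutant of $G$ is a division algebra sitting inside it, that commutant is $\R$, i.e. $\R^n$ is an absolutely irreducible $G$-module. It follows that the unipotent radical of $G$ acts trivially (its fixed space is nonzero and $G$-invariant) and that the connected center of $G$ acts by scalars, which are finite in $\SL(n,\R)$; hence $G$ is semisimple. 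Replacing $S$ by the finite cover corresponding to $\rho^{-1}(G^{\circ})$ --- on which $\rho$ restricts to a lift of a Hitchin representation with the same limit curve --- we may assume $G$ is connected, semisimple, and irreducible on $\R^n$.

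\textbf{Step 2: a principal $\SL(2,\R)$ inside $G$.} This is the crux, and the step I expect to be the main obstacle, since it cannot be done by pure group theory and genuinely requires Labourie's Anosov/Frenet-curve machinery. A Hitchin representation admits a continuous $\rho$-equivariant hyperconvex \emph{Frenet} limit curve $\xi\colon\partial\pi_1(S)\to\mathrm{Flag}(\R^n)$ whose osculating subspaces $\xi^{(1)}(x)\subset\cdots\subset\xi^{(n-1)}(x)\subset\R^n$ are everywhere non-degenerate, meaning that the velocity of $\xi^{(i)}$ spans the line $\xi^{(i+1)}(x)/\xi^{(i)}(x)$. The Zariski closure $G$ preserves both the closure of the image of $\xi$ and this osculating filtration. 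For a loxodromic $\gamma$ with attracting point $\gamma^{+}$, the matrix $\rho(\gamma)$ is $\R$-split with the $\xi^{(i)}(\gamma^{+})$ as partial sums of its top eigenspaces; differentiating the $G$-action along $\xi$ at $\gamma^{+}$, and combining the Frenet non-degeneracy with this eigenspace description, one produces in the Lie algebra $\mathfrak{g}$ of $G$ a nilpotent element that is regular in $\mathfrak{sl}(n,\R)$. By Jacobson--Morozov applied inside $\mathfrak{g}$ this nilpotent lies in an $\mathfrak{sl}_2$-triple contained in $\mathfrak{g}$, and its regularity forces $\R^n$ to restrict to that $\mathfrak{sl}_2$ as the irreducible $n$-dimensional module. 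One also records that $G$ is a \emph{split} real group, since every $\rho(\gamma)$ is $\R$-split.

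\textbf{Step 3: classification.} It then remains to classify the connected semisimple $G\le\SL(n,\R)$ that act irreducibly on $\R^n$ and contain an $\mathfrak{sl}_2$ under which $\R^n$ stays irreducible. Decomposing an irreducible $\mathfrak{g}$-representation under its principal $\mathfrak{sl}_2$ --- whose weights are governed by the exponents of $\mathfrak{g}$ --- one finds that irreducibility of this restriction is very restrictive, forcing $\mathfrak{g}$ to be $\mathfrak{sl}_2$, the symplectic algebra of type $C_k$ in its standard $2k$-dimensional representation, the orthogonal algebra of type $B_k$ in its standard $(2k+1)$-dimensional representation, the exceptional $\mathfrak{g}_2$ in its $7$-dimensional fundamental representation, or $\mathfrak{sl}_n$ in its standard representation. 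Imposing the split real form from Step 2 then pins these down to $\tau_n(\SL(2,\R))$, $\Sp(2k,\R)$, $\SO(k+1,k)$, the split form of $G_2$ in its $7$-dimensional representation, and $\SL(n,\R)$, i.e. exactly cases (i)--(v). Finally each of these is realized as the Zariski closure of some Hitchin representation (the Fuchsian locus realizes (i); the symplectic, orthogonal and $G_2$ Hitchin loci realize (ii)--(iv)), so the list is complete and sharp; and since each listed group is self-normalizing in $\SL(n,\R)$ up to scalars, the Zariski closure was in fact already connected.
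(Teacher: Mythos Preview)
The paper does not prove this theorem at all: it is stated as a result of Guichard, cited to \cite{guichard}, with a remark pointing to an alternate proof in \cite{sambarino2020infinitesimal}. There is therefore no in-paper argument to compare your proposal against.

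That said, your sketch is a reasonable outline of the actual classification and is broadly in the spirit of the cited proofs: irreducibility and loxodromy force $G$ to be semisimple and split; the Frenet/Anosov structure produces a principal $\mathfrak{sl}_2$ inside $\mathfrak{g}$ acting irreducibly on $\R^n$; and then one invokes the (Dynkin-style) classification of irreducible representations of simple Lie algebras that remain irreducible on a principal $\mathfrak{sl}_2$. Step~2 is indeed the delicate part, and your account there is necessarily impressionistic---``differentiating the $G$-action along $\xi$'' and extracting a regular nilpotent requires real work that you have only gestured at---but as a high-level roadmap it is accurate. For the purposes of this paper, however, the theorem is used as a black box, so a full proof is neither expected nor needed.
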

\begin{remark}
An alternate proof of this same classification is also given in \cite{sambarino2020infinitesimal}.
\end{remark}

We will assume $n\neq 7$ for simplicity so that the possible Zariski closures will only be conjugate to options (i)--(iii) or (v) in the above list. From here, we will show that if the Zariski closure of $\rho(\pi_1(S))$ is one of (i)--(iii) in the above list, then there is a deformation of $\rho$ with the properties that it is: $K$-integral, on the Hitchin component and has a strictly larger Zariski closure. By performing this deformation multiple times if necessary, we will be able to guarantee that the Zariski closure at the final stage will be all of $\SL(n,\R)$ hence define a Zariski dense subgroup. Guichard's classification of possible Zariski closures is also used in this manner in \cite{longthis:2020}, but also relies on a generalization of the result to representations of orbifold fundamental groups on Hitchin components found in \cite{ALSorbifolds}. 

\section{Bending around nonseparating curves}
\label{section:bending}
We now describe the bending construction which will be used to deform the representations in order to successively enlarge the Zariski closures of the image. For our surface $S=S_g$, we fix a nonseparating simple closed curve $\gamma$. We let $S'$ denote the compact surface one gets by cutting $S$ along $\gamma$ and denote by $\gamma_1$ and $\gamma_2$ the two boundary components of $S'$. Up to some homeomorphism of $S$, we can suppose our setup is of the form in Figure \ref{fig:SS'}.

\begin{figure}[ht]
    \centering

    \tikzset{every picture/.style={line width=0.75pt}} 

    \begin{tikzpicture}[x=0.75pt,y=0.75pt,yscale=-1,xscale=1]

        \draw  [color={rgb, 255:red, 0; green, 0; blue, 255 }  ,draw opacity=1 ] (292.37,151.35) .. controls (292.37,140.31) and (295.73,131.35) .. (299.87,131.35) .. controls (304.02,131.35) and (307.37,140.31) .. (307.37,151.35) .. controls (307.37,162.4) and (304.02,171.35) .. (299.87,171.35) .. controls (295.73,171.35) and (292.37,162.4) .. (292.37,151.35) -- cycle ;
        \draw  [color={rgb, 255:red, 0; green, 0; blue, 255 }  ,draw opacity=1 ] (294.71,148.37) -- (292.13,153.33) -- (289.71,148.29) ;
        \draw  [color={rgb, 255:red, 0; green, 0; blue, 255 }  ,draw opacity=1 ] (292.65,78.75) .. controls (292.65,67.71) and (296.01,58.75) .. (300.15,58.75) .. controls (304.29,58.75) and (307.65,67.71) .. (307.65,78.75) .. controls (307.65,89.8) and (304.29,98.75) .. (300.15,98.75) .. controls (296.01,98.75) and (292.65,89.8) .. (292.65,78.75) -- cycle ;
        \draw  [color={rgb, 255:red, 0; green, 0; blue, 255 }  ,draw opacity=1 ] (290.01,80.68) -- (292.49,75.67) -- (295.01,80.66) ;
        \draw [color={rgb, 255:red, 0; green, 0; blue, 255 }  ,draw opacity=1 ]   (555.08,118.08) .. controls (556.11,101.67) and (599.22,100.33) .. (600,115) ;
        \draw   (400,115) .. controls (400,79.1) and (444.77,50) .. (500,50) .. controls (555.23,50) and (600,79.1) .. (600,115) .. controls (600,150.9) and (555.23,180) .. (500,180) .. controls (444.77,180) and (400,150.9) .. (400,115) -- cycle ;
        \draw    (435,115) .. controls (436.29,120) and (452.29,120.86) .. (455,115) ;
        \draw    (540,115) .. controls (541.29,120) and (557.29,120.86) .. (560,115) ;
        \draw    (439.94,118.08) .. controls (442.37,114.08) and (447.51,114.37) .. (449.94,118.08) ;
        \draw    (545.08,118.08) .. controls (547.51,114.08) and (552.65,114.37) .. (555.08,118.08) ;
        \draw [color={rgb, 255:red, 0; green, 0; blue, 255 }  ,draw opacity=1 ] [dash pattern={on 0.84pt off 2.51pt}]  (555.08,118.08) .. controls (554.5,131.38) and (600.25,133.13) .. (600,115) ;
        \draw  [color={rgb, 255:red, 0; green, 0; blue, 255 }  ,draw opacity=1 ] (575.84,102.67) -- (580.99,104.83) -- (576.17,107.66) ;
        \draw  [draw opacity=0] (299.87,171.35) .. controls (285.19,176.85) and (268.16,180) .. (250,180) .. controls (194.77,180) and (150,150.9) .. (150,115) .. controls (150,79.1) and (194.77,50) .. (250,50) .. controls (268.28,50) and (285.41,53.19) .. (300.15,58.75) -- (250,115) -- cycle ; \draw   (299.87,171.35) .. controls (285.19,176.85) and (268.16,180) .. (250,180) .. controls (194.77,180) and (150,150.9) .. (150,115) .. controls (150,79.1) and (194.77,50) .. (250,50) .. controls (268.28,50) and (285.41,53.19) .. (300.15,58.75) ;
        \draw    (185,115) .. controls (186.29,120) and (202.29,120.86) .. (205,115) ;
        \draw    (189.94,118.08) .. controls (192.37,114.08) and (197.51,114.37) .. (199.94,118.08) ;
        \draw    (300.15,98.75) .. controls (261,99.5) and (255.67,130.17) .. (299.87,131.35) ;

        \draw (495,115) node    {$\dotsc $};
        \draw (602,115) node [anchor=west] [inner sep=0.75pt]  [color={rgb, 255:red, 0; green, 0; blue, 255 }  ,opacity=1 ]  {$\gamma$};
        \draw (235,115) node    {$\dotsc $};
        \draw (302.15,55.35) node [anchor=south west] [inner sep=0.75pt]  [color={rgb, 255:red, 0; green, 0; blue, 255 }  ,opacity=1 ]  {$\gamma_{1}$};
        \draw (301.87,174.75) node [anchor=north west][inner sep=0.75pt]  [color={rgb, 255:red, 0; green, 0; blue, 255 }  ,opacity=1 ]  {$\gamma_{2}$};
        \draw (432.45,63.33) node [anchor=south east] [inner sep=0.75pt]    {$S$};
        \draw (170.15,68) node [anchor=south east] [inner sep=0.75pt]    {$S'$};
    \end{tikzpicture}
    \caption{The setup for the bending construction.}
    \label{fig:SS'}
\end{figure}
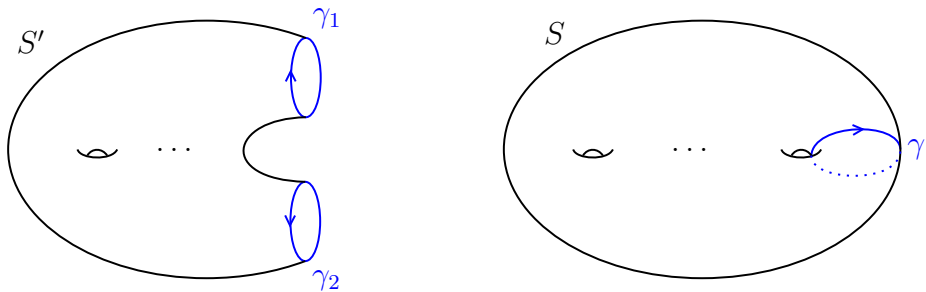

In $S$, the curves $\gamma$, $\gamma_1$ and $\gamma_2$ will all be freely homotopic, and by an abuse of notation, we will write $\gamma_1$ and $\gamma_2$ to denote the elements of $\pi_1(S')$ and identify $\gamma$ with $\gamma_1$ in $\pi_1(S)$. In this setup, it is known that $\pi_1(S)$ can be written as the following \textit{HNN extension}:
\[
\pi_1(S)=\pi_1(S')*_\alpha :=\langle \pi_1(S'),\alpha \,|\,\alpha \gamma_2\alpha ^{-1}=\gamma_1\rangle.
\]
Given any representation $\rho:\pi_1(S)\to\SL(n,\R)$ and any matrix $A\in\SL(n,\R)$ in the centralizer of $\rho(\gamma)$, we can define a new \textit{bent} representation $\rho^A:\pi_1(S)\to\SL(n,\R)$ by setting:
\[
\rho^A|_{\pi_1(S')}:=\rho|_{\pi_1(S')}\quad\textrm{and}\quad\rho^A(\alpha):=A\rho(\alpha).
\]
With extra care for the choice of centralizing $A$, this bending procedure will also preserve a number of useful properties of the original representation.

Firstly, if $\rho$ has image in $\SL(n,\O_K)$ and we choose $A$ in the $\O_K$-centralizer of $\rho(\gamma)$, then it's clear from the formula of the bend that $\rho^A$ still has image inside of $\SL(n,\O_K)$. So with additional care, the bending construction can be done in such a way as to preserve $K$-integrality of the original representation.

Secondly, if we take care to choose $A$ so that $A=\exp(X)$ for some \textit{diagonalizable} $X\in\mathfrak{sl}(n,\R)$, then there is a one-parameter family of matrices given by
\[
A_t:=\exp(tX)\in\SL(n,\R),
\]
for $t\in\R$. For this one-parameter family, we have that $A_0=I_n$, $A_1=A$. Moreover, by diagonalizability, for any $t\in\R$ $A_t$ still centralizes $\rho(\gamma)$. Note that we are using the fact that $\rho(\gamma)$ is also diagonalizable as $\rho$ lies on the Hitchin component to guarantee the matrices $A_t$ all still centralize $\rho(\gamma)$. But in particular, this one-parameter family of matrices $A_t$ defines a path of representations $\rho^{A_t}:\pi_1(S)\to\SL(n,\R)$ from $\rho^{A_0}=\rho$ to $\rho^{A_1}=\rho^A$. Thus, $\rho^A$ lies on the same path component of $\mathcal{R}_n(S)$ as $\rho$. In particular, as $\rho$ is a Hitchin representation, so is $\rho^A$.

So in short, many of the desired properties of the original representation can be preserved by careful choice of the bending matrix $A$, used to deform $\rho$. However, bending construction in this context is useful not just for its ability to preserve ``nice'' properties of our representations, but to ``improve'' them in the sense of producing representations with larger Zariski closures. What ultimately enables this is the following.

\begin{lemma}
\label{largerclosure}
Suppose $\rho(\pi_1(S))$ has Zariski closure contained in $G\neq\SL(n,\R)$. If in addition to the above properties, we choose $A\notin G$, then the Zariski closure of $\rho^A(\pi_1(S))$ is strictly larger than $G$.
\end{lemma}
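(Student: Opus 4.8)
The plan is to put $H:=\operatorname{Zcl}(\rho^A(\pi_1(S)))$ and deduce $G\subsetneq H$ from Guichard's classification (Theorem~\ref{guichard}) together with the rigidity of the principal $\SL(2,\R)$ inside $\SL(n,\R)$. Since the one-parameter family $\rho^{A_t}$ keeps $\rho^A$ in $\Hit_n(S)$, Theorem~\ref{guichard} applies to $\rho$ and to $\rho^A$ alike: because $n\neq 7$, each of $G$ and $H$ is conjugate in $\SL(n,\R)$ either to $\tau_n(\SL(2,\R))$, to the classical subgroup $C_n$ given by $\Sp(2k,\R)$ when $n=2k$ and by $\SO(k+1,k)$ when $n=2k+1$, or to $\SL(n,\R)$ itself; and $G\neq\SL(n,\R)$ by hypothesis. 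If $H=\SL(n,\R)$ there is nothing more to prove, so I would assume $H$ is conjugate to $\tau_n(\SL(2,\R))$ or to $C_n$.

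The crucial structural input is that $\rho^A$ and $\rho$ agree on $\pi_1(S')$, so $G':=\operatorname{Zcl}(\rho(\pi_1(S')))$ lies inside $G\cap H$. One cannot hope for $G'=G$ in general — restricting a Hitchin representation to $S'$ may land in the Fuchsian locus of $\Hit_n(S')$ while $\rho$ is not Fuchsian — so instead I would use that $\rho|_{\pi_1(S')}$ is still a Hitchin representation of the essential subsurface $S'$; this is exactly where the version of Guichard's theorem for compact surfaces with boundary from \cite{ALSorbifolds} enters, and it shows that $G'$ is again conjugate to one of the groups on Guichard's list. In particular $G'$ contains a conjugate of the principal $\SL(2,\R)$. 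Since the desired conclusion and all the hypotheses are invariant under a simultaneous conjugation of $\rho$, $\rho^A$, $A$, $G$, $H$ and $G'$, I may arrange that the standard irreducible copy $\tau_n(\SL(2,\R))$ is contained in $G'$, hence in $G\cap H$.

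The heart of the argument is then a rigidity statement. As $\tau_n(\SL(2,\R))$ acts irreducibly on $\R^n$, Schur's lemma gives a unique (up to scalar) nondegenerate invariant bilinear form, symplectic when $n$ is even and symmetric when $n$ is odd; let $C_n^{\mathrm{std}}$ denote its isometry group. Consequently, a subgroup of $\SL(n,\R)$ that contains $\tau_n(\SL(2,\R))$ and is conjugate to $\tau_n(\SL(2,\R))$ or to $C_n$ can only be $\tau_n(\SL(2,\R))$ itself (equal dimension, both connected) or $C_n^{\mathrm{std}}$ (it must preserve the unique invariant form, hence sit inside $C_n^{\mathrm{std}}$, hence equal it for dimensional reasons). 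Applying this to $G$ and to $H$ shows that both of them lie in the two-term chain $\tau_n(\SL(2,\R))\subsetneq C_n^{\mathrm{std}}$.

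To finish, note that $\rho^A(\alpha)=A\rho(\alpha)\in H$ whereas $\rho(\alpha)\in G$; if $H$ were contained in $G$ this would force $A=(A\rho(\alpha))\rho(\alpha)^{-1}\in G$, contradicting $A\notin G$. Thus $H\not\subseteq G$, and since $G$ and $H$ both belong to the chain $\tau_n(\SL(2,\R))\subsetneq C_n^{\mathrm{std}}$ the only remaining possibility is $G=\tau_n(\SL(2,\R))$ and $H=C_n^{\mathrm{std}}$, so $G\subsetneq H$. The step I expect to be the genuine obstacle is the assertion in the second paragraph that $\rho|_{\pi_1(S')}$ is a Hitchin representation of $S'$, so that Guichard's classification via \cite{ALSorbifolds} pins down $G'$ (and in particular controls the case $G'\subsetneq G$ that the naive argument overlooks); granted that, the rest is a dimension count and an application of Schur's lemma.
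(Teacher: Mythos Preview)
Your argument is correct and follows essentially the same route as the paper's: both use that $\rho^A$ and $\rho$ coincide on $\pi_1(S')$, invoke the Hitchin theory for $S'$ (via \cite{ALSorbifolds}) to get a principal $\SL(2,\R)$ inside $G\cap H$, and then apply Schur's lemma to force any invariant form to be unique up to scalar. The paper organizes the endgame as a case analysis (ruling out ``smaller'' and ``nontrivial conjugate'' separately), whereas you package it as the single observation that $G$ and $H$ both lie in the two-term chain $\tau_n(\SL(2,\R))\subsetneq C_n^{\mathrm{std}}$; your formulation is arguably cleaner, and in particular your worry about the $\rho|_{\pi_1(S')}$ step is exactly the step the paper also leans on without further comment.
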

\begin{proof}
The Zariski closure of $\rho^A(\pi_1(S))$ must still be among the ones of Theorem \ref{guichard} as $\rho^A$ is still on the Hitchin component of $S$. A priori, it may be unclear why the image of $\rho^A$ might not be some nontrivial conjugate of $G$ or a proper subgroup of it, but we can rule each of these possibilities out as well, thus guaranteeing that the Zariski closure must indeed get larger.

As $G\neq\SL(n,\R)$, consulting the list of Theorem \ref{guichard}, we see that either $G$ is conjugate to either $\tau_n(\SL(2,\R))$, $\Sp(2k,\R)$ or $\SO(k+1,k)$. In the first case, $\rho^A(\pi_1(S))$ can't have a smaller Zariski closure as no proper subgroup of $\tau_n(\SL(2,\R))$ is a possible Zariski closure for a Hitchin representation. In the latter two cases, it follows that there is some alternating or symmetric bilinear form, $J$, which is preserved by the image of $\rho$. Basic calculations verify that image of $\SL(2,\R)$ under the irreducible representation $\tau_n$ is contained in $\Sp(2k,\R)$ in the $n$ even case and $\SO(k+1,k)$ in the $n$ odd case. Therefore, if $\rho^A(\pi_1(S))$ had Zariski closure smaller than $G$, then the it must be $\tau_n(\SL(2,\R))$ in which case, the same form $J$ is still preserved. But this cannot be as $A$ won't preserve $J$ by virtue of not being in $G$.

All that remains is to rule out why the Zariski closure of $\rho^A(\pi_1(S))$ can't be some nontrivial conjugate of $G$. If this were the case, then there would be another non-degenerate (symmetric or alternating) bilinear form $J'$ which is preserved by the image of $\rho^A(\pi_1(S))$. But now, observe that the representations $\rho$ and $\rho^A$ are identical on the compact subsurface $S'\subset S$. In particular, the image of the representation $\rho|_{\pi_1(S')}=\rho^A|_{\pi_1(S')}$ preserves both the forms $J$ and $J'$.

As both $J$ and $J'$ are preserved, $J^{-1}J'\rho(\beta)=\rho(\beta)J^{-1}J'$ for all $\beta\in\pi_1(S')$. The subgroup $\pi_1(S')$ has Zariski closure at least containing an (absolutely irreducible) principal $\SL(2,\R)$. $J^{-1}J'$ centralizes all of this principal $\SL(2,\R)$, hence by Schur's lemma, is a homothety. In particular, $J$ and $J'$ are scalar multiples of one another and define the same orthogonal or symplectic groups.

Thus the Zariski closure of $\rho^A(\pi_1(S))$ cannot be a subgroup of, or a conjugate of $G$, and thus must be strictly larger among the possibilities provided by Guichard's list.
\end{proof}

Theorem \ref{maintheorem} is then proven by repeated applications of the bending constructions as follows.

\begin{proof}[Proof of \ref{maintheorem}.]
We let $\rho:\pi_1(S)\to\SL(n,\O_K)$ be a representation on the Hitchin component with Zariski closure $G\neq\SL(n,\R)$. By the various methods in \S\ref{section:constructingdelta} we can construct a matrix $A\in\SL(n,\O_K)$ satisfying the following properties:
\begin{enumerate}[label=(\alph*)]
    \item $A$ centralizes $\rho(\gamma)$ for some nonseparating simple closed curve $\gamma$ (after possibly passing to a finite sheeted cover $\widetilde{S}\to S$);
    \item $A\notin G$;
    \item $A=\exp(X)$ for some diagonalizable $X\in\mathfrak{sl}(n,\R)$.
\end{enumerate}
From such a matrix, property (a) allows us to form the new bent representation $\rho^A$. By property (b), along with Lemma \ref{largerclosure}, this also guarantees that $\rho^A(\pi_1(\widetilde{S}))$ has a larger Zariski closure than $G$. $\rho^A$ still remains on the Hitchin component of $\widetilde{S}$ by property (c) as outlined at the beginning of this section.

We can then repeat this process (at most twice, when $n\neq 7$) until all of the finitely many possible Zariski closures of Theorem \ref{guichard} are ruled out except for $\SL(n,\R)$ itself at which point, the final representation must be Zariski dense, $K$-integral and on the Hitchin component. 
\end{proof}

\section{Constructing the bending matrices}
\label{section:constructingdelta}
To complete the proof of Theorem \ref{maintheorem}, it remains to be shown how one may construct the matrices $A$ satisfying each of properties (a)--(c) used in the proof. Different techniques for constructing such a matrix are needed depending on the various possibilities of the initial Zariski closure provided by Theorem \ref{guichard} as well as the properties of the number field $K$. Before we discuss the specific methods used to construct $A$ in each case, we record the following pair of useful lemmas here which will be used throughout.

\begin{lemma}
\label{integralpower}
Given $M\in\SL(n,K)$ whose characteristic polynomial is in $\O_K[t]$, then there is some $j>0$ so that $M^j\in\SL(n,\O_K)$.
\end{lemma}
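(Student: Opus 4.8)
The plan is to establish the statement one prime at a time and then patch. Since $M$ has entries in $K$, for all but finitely many primes $\mathfrak{p}$ of $\O_K$ every entry has nonnegative $\mathfrak{p}$-adic valuation, so $M\in M_n(\O_{K,\mathfrak{p}})$ there; let $T$ be the resulting finite set of exceptional primes. As $\O_K=\bigcap_\mathfrak{p}\O_{K,\mathfrak{p}}$, it suffices to produce, for each $\mathfrak{p}\in T$, an integer $j_\mathfrak{p}>0$ such that $M^{m j_\mathfrak{p}}\in M_n(\O_{K,\mathfrak{p}})$ for every $m\ge 1$; then $j=\operatorname{lcm}_{\mathfrak{p}\in T}j_\mathfrak{p}$ gives $M^j\in M_n(\O_K)$, and $\det(M^j)=1$ completes the proof.

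To handle a fixed $\mathfrak{p}\in T$, I would pass to the completion $K_\mathfrak{p}$, with valuation ring $\O_\mathfrak{p}$ a discrete valuation ring and uniformizer $\varpi$. Observe first that every eigenvalue of $M$ in $\overline{K_\mathfrak{p}}$ is a unit: the characteristic polynomial is monic over $\O_\mathfrak{p}$, so the eigenvalues are integral, and their product is $\pm\det M=\pm 1$, which forces every eigenvalue to have valuation zero; the same applies to $M^{-1}$. The key step is then to show that $\{M^k:k\in\Z\}$ is bounded in $M_n(K_\mathfrak{p})$. I would check this over $\overline{K_\mathfrak{p}}$ using the multiplicative Jordan decomposition $M=SU$ with $S$ semisimple, $U$ unipotent and $SU=US$: the matrix $S$ is conjugate to a diagonal matrix with unit entries, so $\{S^k\}$ is bounded; writing $U=I+N$ with $N$ nilpotent, $U^k=\sum_i\binom{k}{i}N^i$ and $|\binom{k}{i}|_\mathfrak{p}\le 1$, so $\{U^k\}$ is bounded; hence $\{M^k\}$ is bounded, and the same argument applied to $M^{-1}$ shows $\{M^{-k}\}$ is bounded.

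Boundedness of $\{M^{\pm k}\}$ gives uniform inclusions $\varpi^a\O_\mathfrak{p}^n\subseteq M^k\O_\mathfrak{p}^n\subseteq\varpi^{-b}\O_\mathfrak{p}^n$ for all $k\in\Z$ and some fixed $a,b\ge 0$. There are only finitely many $\O_\mathfrak{p}$-submodules of $\varpi^{-b}\O_\mathfrak{p}^n$ containing $\varpi^a\O_\mathfrak{p}^n$, since these correspond to submodules of the finite module $\varpi^{-b}\O_\mathfrak{p}^n/\varpi^a\O_\mathfrak{p}^n$; so by the pigeonhole principle $M^{k_1}\O_\mathfrak{p}^n=M^{k_2}\O_\mathfrak{p}^n$ for some $k_1<k_2$, and setting $j_\mathfrak{p}=k_2-k_1$ we obtain $M^{j_\mathfrak{p}}\O_\mathfrak{p}^n=\O_\mathfrak{p}^n$, i.e.\ $M^{j_\mathfrak{p}}\in\GL(n,\O_\mathfrak{p})\subseteq M_n(\O_\mathfrak{p})$. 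Iterating, $M^{m j_\mathfrak{p}}\in M_n(\O_\mathfrak{p})$ for all $m\ge 1$, and intersecting with $M_n(K)$ this is exactly the local statement the patching step requires.

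I expect the boundedness of the powers of $M$ to be the main obstacle, and the point to emphasize is that it is genuinely a \emph{local} phenomenon: globally the powers of $M$ need not stay bounded, because a unit of $\O_K$ can have archimedean absolute value larger than $1$, so one cannot run the ``only finitely many lattices in between'' argument over $\O_K$ itself -- the localization is essential. The other ingredients -- the reduction to the finite set $T$, the structure theory of modules over a discrete valuation ring, and reassembly via $\O_K=\bigcap_\mathfrak{p}\O_{K,\mathfrak{p}}$ -- are routine, and in fact none of them uses the class number one assumption on $K$.
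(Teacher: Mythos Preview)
Your proof is correct and takes a genuinely different route from the paper's. The paper conjugates $M$ to its rational canonical form $R$, which has entries in $\O_K$ because the invariant factors, being monic divisors of the characteristic polynomial, lie in $\O_K[t]$ (the paper invokes Gauss's lemma and the class number one hypothesis for this step); it then uses finiteness of $\SL(n,\O_K/(N))$ to find $R^j\equiv I\pmod{N}$, so that $M^j=P R^j P^{-1}=I+NPXP^{-1}$ is integral once $N$ is chosen to clear the denominators of $P$ and $P^{-1}$. Your local--global approach trades this single global conjugation for a prime-by-prime argument: boundedness of $\{M^k\}$ in each completion $K_\mathfrak{p}$ via the Jordan decomposition, followed by pigeonhole on the finitely many lattices between $\varpi^a\O_\mathfrak{p}^n$ and $\varpi^{-b}\O_\mathfrak{p}^n$. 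Your method is more conceptual and, as you correctly observe, avoids the class number one assumption entirely; the paper's is more elementary in that it never leaves $\O_K$ and requires neither completions nor the Jordan decomposition.
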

\begin{proof}
Using the rational canonical form of $M$, there is a $P\in\GL(n,K)$ so that $M=PRP^{-1}$ where $R\in\SL(n,\O_K)$. Already here, we are implicitly using the hypothesis that $K$ have class number one to guarantee that $R\in\SL(n,\O_K)$. Namely, if $f\in\O_K[t]$ denotes the characteristic polynomial of $M$, then a factorization of $f$ into irreducibles over $\O_K$ will be a factorization into irreducibles over $K$ by Gauss's lemma as $\O_K$ is a UFD. Thus the invariant factors of $f$ can be guaranteed to all be in $\O_K[t]$ as well and so the rational canonical form of $f$ will have entries in $\O_K$.

For any $N\in\O_K$, we consider the reduction map $\SL(n,\O_K)\to\SL(n,\O_K/(N))$. $\SL(n,\O_K/(N))$ is finite so there is some $j$ so that $R^j\equiv I\pmod{(N)}$. In other words, $R^j=I+NX$ for some $X\in\operatorname{M}_n(\O_K)$. In this case, we have that
\[
M^j=P(I+NX)P^{-1}=I+NPXP^{-1}.
\]
In the above expression, the only term depending on $N$ (other than $N$ itself) is $X$, which already has integral entries. Thus, if we pick $N\in\O_K$ so that it clears all the denominators of all entries in $P$ and $P^{-1}$, we get that $M^j\in\SL(n,\O_K)$. 
\end{proof}

\begin{lemma}
\label{irreduciblecharpoly}
Suppose $\rho(\gamma)\in\SL(n,\O_K)$ has characteristic polynomial $f(t)$ where $f\in\O_K[t]$ is irreducible over $\O_K$. Then there is some $A\in\SL(n,\O_K)$ centralizing $\rho(\gamma)$, not preserving the form $J$ and so that $A=\exp(X)$ for some diagonalizable $X\in\mathfrak{sl}(n,\R)$.
\end{lemma}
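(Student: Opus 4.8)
The plan is to exploit that the irreducibility of $f$ turns the centralizer of $\rho(\gamma)$ into a number field $L$, and then to build $A$ from a carefully chosen unit of an order in the subfield of $L$ fixed by the involution $\rho(\gamma)\mapsto\rho(\gamma)^{-1}$. First I would set up the algebra. Since $\rho(\gamma)$ has distinct eigenvalues (it is loxodromic, being on the Hitchin component), its minimal and characteristic polynomials coincide, so it is non-derogatory and its centralizer in $\operatorname{M}_n(K)$ is exactly $K[\rho(\gamma)]$; as $f$ is irreducible over $K$ (Gauss's lemma, $\O_K$ being a UFD), this centralizer is a field $L:=K[\rho(\gamma)]\cong K[t]/(f)$ of degree $n$ over $K$. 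Because $\rho(\gamma)$ in fact has $n$ distinct \emph{positive real} eigenvalues, $f$ splits into distinct real linear factors over $\R$, so $L$ has exactly $n$ real embeddings $\sigma_1,\dots,\sigma_n$ extending the inclusion $\iota\colon K\hookrightarrow\R$, and $L\otimes_K\R$ is the algebra of matrices diagonal in a fixed real eigenbasis of $\rho(\gamma)$, with $\sigma_i$ reading off the eigenvalues. I would then pass to the order $B:=\operatorname{M}_n(\O_K)\cap L$ in $L$ (it contains $\O_K[\rho(\gamma)]$), and record the translation: a matrix in $\SL(n,\O_K)$ centralizing $\rho(\gamma)$ is the same thing as a unit $A\in B^\times$ with $N_{L/K}(A)=1$, since then $A,A^{-1}\in B\subseteq\operatorname{M}_n(\O_K)$ and $\det A=N_{L/K}(A)=1$.

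Next I would bring in the form. One may take $J\in\GL(n,K)$, and since $\rho(\gamma)$ lies in $\operatorname{O}(J)$ or $\Sp(J)$ we have $J^{-1}\rho(\gamma)^{\mathsf T}J=\rho(\gamma)^{-1}$, so the transpose-type involution $h\mapsto J^{-1}h^{\mathsf T}J$ maps $L$ to itself and restricts there to a $K$-algebra involution $\tau$ with $\tau(\rho(\gamma))=\rho(\gamma)^{-1}$; it is nontrivial because $\rho(\gamma)^2\neq I$ (the eigenvalues are distinct and positive). Hence $[L:L^\tau]=2$, so $n$ is even, and indeed $n\geq 4$: for $n=2$ one has $G\subseteq\Sp(2,\R)=\SL(2,\R)$ and the lemma is vacuous. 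Write $L_0:=L^\tau=K(\rho(\gamma)+\rho(\gamma)^{-1})$, a field of degree $n/2\geq 2$ over $K$, and $B_0:=B\cap L_0$, an order in $L_0$. The point of $\tau$ is that for $A\in L^\times$ one has $A\in\operatorname{O}(J)$ (resp.\ $\Sp(J)$) if and only if $\tau(A)A=1$; in particular an element $A\in L_0^\times$ fails to preserve $J$ exactly when $A^2\neq 1$, i.e.\ when $A\neq\pm 1$.

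It therefore suffices to exhibit $A\in B_0^\times$ with $A\neq 1$, with $\sigma_i(A)>0$ for every $i$, and with $N_{L_0/K}(A)=1$. Such an $A$ does the job: $N_{L/K}(A)=N_{L_0/K}(A\,\tau(A))=N_{L_0/K}(A^2)=N_{L_0/K}(A)^2=1$, so $A\in\SL(n,\O_K)$, and it centralizes $\rho(\gamma)$ as it lies in $L$; it does not preserve $J$ (since $A\neq\pm 1$ — note $A\neq-1$ because all $\sigma_i(A)>0$), hence $A\notin G$; and its eigenvalues $\sigma_1(A),\dots,\sigma_n(A)$ are positive reals with product $\iota(N_{L/K}(A))=1$, so the real matrix $X:=\log A$, diagonal with entries $\log\sigma_i(A)$ in the eigenbasis of $\rho(\gamma)$, is diagonalizable, lies in $\mathfrak{sl}(n,\R)$, and satisfies $\exp(X)=A$. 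To produce $A$, consider the finite-index subgroup of $B_0^\times$ of units positive at all the $\sigma_i$, and inside it the kernel of $u\mapsto\iota(N_{L_0/K}(u))\in\R_{>0}$; by Dirichlet's unit theorem this kernel has rank at least $(\text{unit rank of }L_0)-(\text{unit rank of }K)\geq n/2-1\geq 1$, the middle inequality holding because every archimedean place of $K$ has at least one place of $L_0$ above it while the place given by $\iota$ has exactly $n/2$. So the kernel is infinite, and any non-identity element is a valid $A$.

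I expect the main obstacle to be precisely this last existence step: one must find a single unit that is simultaneously a relative norm-one element (to force determinant one and integrality of the inverse), totally positive at the archimedean places over $\iota$ (to realize it as $\exp$ of a traceless diagonalizable matrix), and not a root of unity (to keep it outside $G$). Each condition is easy in isolation, but securing all three at once is exactly what the Dirichlet rank count delivers, and that count needs $[L_0:K]=n/2\geq 2$ — which is why the earlier observation that irreducibility of $f$ forces $n$ even and (non-vacuously) $n\geq 4$ is essential. Everything else — identifying the centralizer with $L$, the reciprocal involution $\tau$ and its relation to $J$, and the exponential computation — is routine.
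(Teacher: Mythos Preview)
Your argument is correct and takes a genuinely different route from the paper. The paper works entirely inside the field $L=K[\rho(\gamma)]\cong K(\alpha)$: it observes that $\O_{K(\alpha)}^\times$ has rank at least $n-1$ while the centralizer of $\rho(\gamma)$ in the group preserving $J$ is a real torus of rank $n/2$, so some unit $u$ has image $A'\in K[\rho(\gamma)]$ with no power preserving $J$; it then repairs the determinant by rescaling by a power of $N_{K(\alpha)/K}(u)^{-1}$ and finally invokes Lemma~\ref{integralpower} to force a high power into $\SL(n,\O_K)$. You instead isolate the Galois involution $\tau$ coming from $h\mapsto J^{-1}h^{\mathsf T}J$, pass to the fixed subfield $L_0=L^\tau$, and search there: for $A\in L_0^\times$ the condition ``does not preserve $J$'' becomes simply $A\neq\pm 1$, and by taking $A$ to be a relative norm-one unit of the order $B_0=\operatorname{M}_n(\O_K)\cap L_0$ you get $\det A=N_{L/K}(A)=N_{L_0/K}(A)^2=1$ and integrality of $A$ and $A^{-1}$ for free, so Lemma~\ref{integralpower} is never needed. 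Your Dirichlet count (rank of $B_0^\times$ minus rank of $\O_K^\times$ is at least $n/2-1\geq 1$) is the refined analogue of the paper's cruder $n-1>n/2$. The trade-off is that the paper's argument is quicker to set up, while yours is structurally cleaner and makes transparent why the obstruction disappears once $n\geq 4$: the field $L_0$ simply has enough room over $K$ for a nontrivial relative unit.
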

\begin{proof}
As $f$ is irreducible over the UFD $\O_K$, it is irreducible over $K$. Let $\alpha\in\overline{K}$ denote some root of $f$ and consider the extension $K(\alpha)/K$. Note that $[K(\alpha):K]=n$ as $f$ must be the minimal polynomial of $\alpha$. All the roots of $f$ must be distinct and real as $\rho$ is on the Hitchin component (\ref{labourie:purelyloxodromic}) and so $K(\alpha)$ has at least $n$ real embedding as we may extend the (fixed) real embedding of $K$ to one of $K(\alpha)$ by sending $\alpha$ to any of the other real roots of $f$. As a consequence, the unit group $\O_{K(\alpha)}^\times$ has rank $\geq n-1$.

By diagonalizing $\rho(\gamma)$ over $\R$ and using the fact that it has distinct real eigenvalues, one sees that the centralizer of $\rho(\gamma)$ in $\SO(J;\R)$ has rank $\frac{n}{2}$. In comparison, $\O_{K(\alpha)}^\times$ has rank $\geq n-1$. Then $n-1>\frac{n}{2}$ as long as $n>2$.

Note then that there is a $K$-algebra isomorphism
\[
K(\alpha)\cong K[\rho(\gamma)]
\]
sending $\alpha\mapsto\rho(\gamma)$ because both of the above algebras are isomorphic to $K[t]/(f)$ for the monic irreducible $f\in\O_K[t]$. Consider now, the image of $\O_{K(\alpha)}^\times$ in $K[\rho(\gamma)]$. By the above rank considerations, there is some infinite order $u\in\O_{K(\alpha)}^\times$ whose image, $A'\in K[\rho(\gamma)]$, satisfies the property that no power of $A'$ preserves the form $J$. Further assume that $u>0$ by replacing $u$ with $-u$ if necessary. $A'$ is a polynomial in powers of $\rho(\gamma)$ and so $A'$ itself will still centralize $\rho(\gamma)$. 

At the moment, $A'$ still has some possibly undesirable properties, but we will modify it to construct the matrix $A$. First, note $A'$ has characteristic polynomial in $\O_K[t]$: its characteristic polynomial factors as $\prod_{i=1}^n(t-u^{\sigma_i})$ where the $\sigma_i:K(\alpha)\to\C$ are the $K$-embeddings of $K(\alpha)$. The determinant of $A'$ at the moment is $v=\operatorname{Norm}_{K(\alpha)/K}(u)\in\O_K^\times$, but we may pass to a higher power of $A'$ and rescale by some power of $v^{-1}$ so that $A'$ has determinant $1$. This power of $A'$ then has characteristic polynomial in $\O_K[t]$ and is in $\SL(n,K)$, hence by Lemma \ref{integralpower}, some further suitable power will have entries in $\O_K$. Taking $A=(A')^j$ to be such a large enough power, we get $A\in\SL(n,\O_K)$.

This matrix $A=(A')^j\in\SL(n,\O_K)$ still centralizes $\rho(\gamma)$ and does not preserve the form $J$ as no power of $A'$ preserves $J$ by construction. Furthermore, such a $A$ will be the exponential of some diagonalizable matrix as we arranged for $u>0$ and as $A'$ is a polynomial in powers of $\rho(\gamma)$, all of which are the exponential of some diagonal matrix by Theorem \ref{labourie:purelyloxodromic}.
\end{proof}
\begin{remark}
In the odd-dimensional case, the characteristic polynomials of the $\rho(\gamma)$ are never irreducible over $\O_K$ as they always have an eigenvalue of $1$, due to being on the Hitchin component. Nonetheless, the same conclusion of this lemma can easily be adopted to the odd-dimensional case if we find $\rho(\gamma)$ with characteristic polynomial of the form $(t-1)f(t)$ for some $f(t)\in\O_K[t]$ irreducible over $\O_K$ since we may conjugate $\rho(\gamma)$ to be in block diagonal form with an $(n-1)\times(n-1)$-block along the diagonal. In this case, applying this result to this block gives the bending matrix $A$ in the odd-dimensional case.
\end{remark}

This lemma provides sufficient conditions for which the bending matrix $A$ used in the proof of Theorem \ref{maintheorem} will exist. This implies that to construct a bending matrix, it suffices to find a nonseparating simple closed curve $\gamma\in\pi_1(S)$ whose characteristic polynomials are of the form $f(t)$ or $(t-1)f(t)$ for some irreducible $f(t)$ in $\O_K[t]$. When such $\gamma$ cannot be found, we then use the reducibility of the characteristic polynomials to explicitly construct bending matrices we may use as well. The methods used in constructing these matrices vary depending on what the input to the bend is, particularly, what the number field $K$ is, and what the Zariski closure of the initial representation $\rho(\pi_1(S))$ is.

\subsection{Bending when \texorpdfstring{$K\neq\Q$}{K!=Q}}
\label{section:numberfieldbend}
The simplest case of the bending procedure occurs when $K$ is a proper extension of $\Q$ with class number one. In this case, we are able to prove the following strengthening of Theorem \ref{maintheorem}.

\begin{theorem}
\label{maintheorem:numberfield}
If $K\neq \Q$ and $\rho:\pi_1(S)\to\SL(n,\O_K)$ is a representation on the Hitchin component whose image is not Zariski dense, then for any nonseparating simple closed curve $\gamma\in\pi_1(S)$, there is an $A\in\SL(n,\O_K)$ centralizing $\rho(\gamma)$ so that the bent representation $\rho^A$ is on the Hitchin component and has Zariski dense image.
\end{theorem}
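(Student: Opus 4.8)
The plan is to exhibit, for the given $\gamma$, a single matrix $A\in\SL(n,\O_K)$ which centralizes $\rho(\gamma)$, has the form $\exp(X)$ with $X\in\mathfrak{sl}(n,\R)$ diagonalizable, and does \emph{not} preserve the bilinear form attached to the Zariski closure of $\rho$. (The case $n=2$ is vacuous, since Hitchin representations into $\SL(2,\R)$ are already Zariski dense, so assume $n\geq 3$ throughout.) Let $G=\operatorname{Zcl}(\rho(\pi_1(S)))$; since $\rho$ is not Zariski dense, Theorem \ref{guichard} forces $G$ to preserve a nondegenerate symmetric or alternating form $J$ on $\R^n$, and --- exactly as in the proof of Lemma \ref{largerclosure}, using that the Zariski closure of $\rho(\pi_1(S'))$ contains an absolutely irreducible principal $\SL(2,\R)$ and invoking Schur --- this $J$ is unique up to scaling among $\rho(\pi_1(S'))$-invariant (hence among $\rho(\pi_1(S))$-invariant) forms.

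Granting such an $A$, one concludes as follows. Since $A$ is the exponential of a diagonalizable matrix and centralizes $\rho(\gamma)$, the one-parameter family argument of \S\ref{section:bending} shows $\rho^A$ is again a Hitchin representation of $S$, so its Zariski closure $H$ appears on Guichard's list. If $H\neq\SL(n,\R)$, then $H$ preserves a nondegenerate form $J'$; but $\rho^A|_{\pi_1(S')}=\rho|_{\pi_1(S')}$ preserves both $J$ and $J'$, so by the uniqueness above $J'=cJ$, whence $\rho^A(\pi_1(S))$ preserves $J$ and therefore so does $A=\rho^A(\alpha)\rho(\alpha)^{-1}$ --- a contradiction. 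Hence $H=\SL(n,\R)$, and everything reduces to constructing $A$.

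Fix a lift with $\rho(\gamma)$ of distinct positive real eigenvalues $\lambda_1,\dots,\lambda_n$, and let $f\in\O_K[t]$ be its characteristic polynomial. In the eigenbasis of $\rho(\gamma)$ the form $J$ has a nonzero $(i,j)$ entry only when $\lambda_i\lambda_j=1$, so $g(\rho(\gamma))=\operatorname{diag}(g(\lambda_1),\dots,g(\lambda_n))$ preserves $J$ if and only if $g(\lambda)g(1/\lambda)=1$ for every eigenvalue $\lambda$ (with $g(1)=\pm1$ when $1$ is an eigenvalue). I will produce $g\in K[t]$ taking only unit values on the eigenvalues, with $g$ positive there and with $\det g(\rho(\gamma))=1$ (arranged by balancing), for which this fails with each relevant $g(\lambda)g(1/\lambda)$ of infinite order; then $g(\rho(\gamma))\in\SL(n,K)$ has characteristic polynomial in $\O_K[t]$, so by Lemma \ref{integralpower} a suitable power $A$ lies in $\SL(n,\O_K)$, centralizes $\rho(\gamma)$, equals $\exp(X)$ for diagonalizable $X$, and still does not preserve $J$. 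If $f$ is irreducible (or is $(t-1)$ times an irreducible, in the odd case) this is precisely Lemma \ref{irreduciblecharpoly} and its remark, valid over any $K$. The remaining situation is $f$ reducible, and here is where $K\neq\Q$ enters --- through the fact that $K\subset\R$ and $K\neq\Q$ force $\O_K^\times$ to be infinite. Pick a positive $\epsilon\in\O_K^\times$ of infinite order. If $n$ is odd, $1$ is an eigenvalue; take $g\equiv\epsilon^{n-1}$ on the $1$-eigenline and $g\equiv\epsilon^{-1}$ on the remaining $n-1$ eigenlines (a $K$-rational interpolation in $\rho(\gamma)$, of determinant $1$): any element of $\SO(J;\R)$ commuting with $\rho(\gamma)$ acts by $\pm1$ on the $1$-eigenline, and $\epsilon^{n-1}\neq\pm1$, so $A$ does not preserve $J$. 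If $n$ is even, choose a proper irreducible factor $f_i\mid f$ with root $\alpha_i$ and degree $d_i$; since $N_{K(\alpha_i)/K}\bigl(\O_{K(\alpha_i)}^\times\bigr)\supseteq(\O_K^\times)^{d_i}$ has positive rank, pick $u_i\in\O_{K(\alpha_i)}^\times$ whose norm (or, when $f_i$ is self-reciprocal, the element $u_i\iota_i(u_i)$ for the involution $\iota_i\colon\alpha_i\mapsto\alpha_i^{-1}$) has infinite order, and let $g$ be an appropriate power of $u_i$ on the $f_i$-eigenspace and a compensating unit scalar on the rest; a short check on whether $f_i$ is self-reciprocal and on $d_i$ versus $n/2$ shows $g(\lambda)g(1/\lambda)\neq1$ for a suitable eigenvalue, so again $A$ does not preserve $J$.

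I expect the main obstacle to be this last step: verifying that the twist by $\epsilon$ (resp.\ $u_i$) genuinely violates $J$-invariance in every factorization pattern of $f$, and that the compensations forced by $\det A=1$ and integrality do not accidentally restore it. This is exactly where $K\neq\Q$ is indispensable: over $\Q$ the relevant norm groups collapse (units of real quadratic fields have norm $\pm1$, and $\O_\Q^\times=\{\pm1\}$), so no such twist exists and one is instead forced to change $\gamma$ by passing to a finite cover, as in Theorem \ref{maintheorem}.
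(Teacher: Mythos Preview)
Your overall structure matches the paper's exactly: reduce to producing $A\in\SL(n,\O_K)$ centralizing $\rho(\gamma)$, of the form $\exp(X)$ with $X$ diagonalizable, and not preserving the (unique up to scalar) form $J$; then invoke Lemma~\ref{largerclosure} and Guichard's list. The irreducible case via Lemma~\ref{irreduciblecharpoly} is identical. For $n$ odd your twist by $\epsilon^{n-1}$ on the $1$-eigenline and $\epsilon^{-1}$ elsewhere is correct and, if anything, cleaner than the paper's block argument: it is a $K$-polynomial in $\rho(\gamma)$ with unit spectrum, determinant one, positive eigenvalues, and the $1$-eigenline condition $(\epsilon^{n-1})^2=1$ visibly fails for infinite-order $\epsilon$.

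The genuine gap is your $n$ even case. The ``short check'' you defer is not short. The hardest factorization pattern is $f=f_1f_2$ with $f_1$ irreducible of degree $n/2$ and $f_2(t)=t^{n/2}f_1(1/t)$ its reciprocal, so that every pair $(\lambda,1/\lambda)$ straddles the two blocks and neither $f_i$ is self-reciprocal. If you put (a power of) $u_1\in\O_{K(\alpha_1)}^\times$ on the $f_1$-block and a compensating scalar $c\in\O_K^\times$ on the $f_2$-block, then $g(\alpha)g(1/\alpha)=c\cdot\sigma(u_1)^m$, and this equals $1$ for every root $\alpha$ precisely when $u_1^m\in K$. Your stated hypothesis that $N_{K(\alpha_1)/K}(u_1)$ has infinite order does \emph{not} rule this out (indeed any $u_1\in\O_K^\times$ of infinite order already satisfies it). One would need to choose $u_1$ so that no power lies in $K$, i.e.\ so that its class in $\O_{K(\alpha_1)}^\times$ modulo $\O_K^\times$ has infinite order --- a rank comparison you have not carried out.

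The paper sidesteps all of this with a single uniform construction for the reducible case, both parities at once: write $P\rho(\gamma)P^{-1}=\operatorname{diag}(C_1,C_2)$ with $C_i$ having characteristic polynomial $f_i\in\O_K[t]$, pick a positive infinite-order $u\in\O_K^\times$, and set
\[
A'=P^{-1}\begin{pmatrix}u^{n_2}I_{n_1}&\\&u^{-n_1}C_2\end{pmatrix}P.
\]
The crucial difference from your attempt is the factor $C_2$ rather than a scalar on the second block. The eigenvalues of $A'$ are $u^{n_2}$ with multiplicity $n_1$ together with the distinct numbers $u^{-n_1}\lambda$ for $\lambda$ an eigenvalue of $C_2$, and these cannot pair up reciprocally; in particular, in the straddling pattern above one gets $g(\alpha)g(1/\alpha)=1/\alpha\neq 1$. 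A power of $A'$ lands in $\SL(n,\O_K)$ by Lemma~\ref{integralpower}. So the fix for your even case is not more unit-group arithmetic in $K(\alpha_i)$ but simply to let one block carry $C_2$ instead of a scalar.
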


In comparison to Theorem \ref{maintheorem}, when $K\neq \Q$, we are able to ensure the representation is Zariski dense while circumventing the need to pass to any finite sheeted covers. The class number one hypothesis enables various integrality properties to be preserved, but the proof of this result is ultimately enabled by the presence of infinite order units in $\O_K$ allowing one to construct the bending matrix $A$ explicitly.

\begin{proof}[Proof of \ref{maintheorem:numberfield}]
For any nonseparating simple closed $\gamma\in\pi_1(S)$, we consider the characteristic polynomial $f$, of $\rho(\gamma)$. If $f$ is irreducible, then we take the matrix $A$ to be the one guaranteed by Lemma \ref{irreduciblecharpoly}. As $A$ doesn't preserve the (unique) form $J$ which $\rho(\pi_1(S))$ must preserve (as in Lemma \ref{largerclosure}), then $\rho^A(\pi_1(S))$ must have Zariski closure larger than $\SO(J;\R)$ or $\Sp(J;\R)$ and so it must be Zariski dense by Theorem \ref{guichard}.

Thus we may assume that $f$ is reducible over $\O_K$. It suffices to assume that $f=f_1 f_2$ for $f_i\in\O_K[t]$ of degree $n_1,n_2\geq 1$ respectively. Replacing $f_1$ and $f_2$ with $uf_1$ and $u^{-1}f_2$ for some unit $u\in\O_K^\times$ if necessary, we may also assume that $f_1$ and $f_2$ are monic and have constant terms $(-1)^{n_1}$ and $(-1)^{n_2}$ respectively. From the factorization of the characteristic polynomial, there is some matrix $P\in\GL(n,K)$ so that
\[
P\rho(\gamma)P^{-1}=\begin{pmatrix} C_1 \\ &C_2\end{pmatrix}
\]
where for $i=1,2$, $C_i\in\SL(n_i,\O_K)$ has characteristic polynomial $f_i$. Here, we are using the fact that the factorization of $f$ is over $\O_K$ and that $K$ has class number one to guarantee that the entries of the $C_i$ are indeed in $\O_K$.

Now, as $K\neq\Q$, $[K:\Q]>1$. Assuming $K\subset\R$ also forces $K$ to have $\geq 1$ real embeddings, and together, this forces $\O_K^\times$ to have rank $\geq 1$. We may thus fix an infinite order unit $u\in\O_K^\times$ and, replacing $u$ with $-u$ if necessary, we may also assume that $u>0$.

In this case, we take the matrix
\[
A':=P^{-1}\begin{pmatrix}
u^{n_2}I_{n_1}\\& u^{-n_1}C_2
\end{pmatrix}P.
\]
$A'\in\SL(n,K)$ has characteristic polynomial still in $\O_K[t]$ and clearly centralizes $\rho(\gamma)$ due to $PA' P^{-1}$ having the same block-diagonal structure as $P\rho(\gamma)P^{-1}$. As $u>0$ and $C_2$ is diagonalizable, $A'=\exp(X)$ for some diagonalizable $X\in\sl(n,\R)$ as well. Note as well that $A'$ does not preserve the form $J$ as its eigenvalues are $u^{n_2}$ (with multiplicity $n_1$) and $u^{-n_1}\lambda$ where $\lambda$ is any of the (distinct, real) eigenvalues of $C_2$, and these eigenvalues do not pair up as needed for any matrix preserving the form $J$.

Passing to a higher power of $A'$ by Lemma \ref{integralpower}, $A=(A')^j\in\SL(n,\O_K)$ for sufficiently large $j$ still centralizes $\rho(\gamma)$ and remains the exponential of some diagonalizable matrix. As $u$ is an infinite order unit, $A$ will still fail to preserve the form $J$ as $A'$ did. Therefore, by Lemma \ref{largerclosure}, the bent representation $\rho^A$ has Zariski dense image.
\end{proof}

By handling the $K\neq\Q$ case separately using the presence of infinite order units in $\O_K$, we may then assume that $K=\Q$ and $\O_K=\Z$ for the remainder of this section.

\subsection{Bending out of a principal \texorpdfstring{$\SL(2,\R)$}{SL(2,R)}}
When $K=\Q$, no infinite order units are present and so an alternate method is necessary. The overall construction of the bending matrices here is then a two-fold process: one to bend the representation out of a principal $\SL(2,\R)$ and a second to bend the representation out of either $\Sp(2k,\R)$ or $\SO(k+1,k)$ to finally get a representation with Zariski dense image in $\SL(n,\R)$. Different methods are used for each bend, the simpler one when the Zariski closure of $\rho(\pi_1(S))$ is ``as small as possible.''

By Theorem \ref{guichard}, the smallest possible Zariski closure that our representation might have is that of a principal $\SL(2,\R)$ inside of $\SL(n,\R)$. So suppose that $\rho:\pi_1(S)\to\SL(n,\Z)$ has Zariski closure conjugate to $\tau_n(\SL(2,\R))$. In this case, we start by fixing any nonseparating simple closed curve $\gamma\in\pi_1(S)$. 

By Theorem \ref{labourie:purelyloxodromic}, $\rho(\gamma)$ is diagonalizable with distinct real eigenvalues. Moreover, as $\rho(\pi_1(S))$ is contained in some principal $\SL(2,\R)$, basic properties about the irreducible representations of $\SL(2,\R)$ tell us that the eigenvalues of $\rho(\gamma)$ are of the form $\lambda^{n-2i-1}$ for $i=0,\ldots,n-1$. In this case, the characteristic polynomial of $\rho(\gamma)$ is of the form $(t-1)f(t)$ or $f(t)$ for some $f\in\Z[t]$ depending on the parity of $n$. If $f(t)$ is irreducible in either of the above cases, we again apply Lemma \ref{irreduciblecharpoly} to construct the matrix $A$ (in the $n$ odd case, we proceed as in the remark after Lemma \ref{irreduciblecharpoly}). Consequently, the matrix $A$ provided by this theorem does not preserve the form this principal $\SL(2,\R)$ does, and so $\rho^A$ will then have Zariski dense image. So just as with the $K\neq \Q$ construction, we are able to guarantee the representation has Zariski dense image after a single bend, \textit{if} we can find a $\gamma\in\pi_1(S)$ with this property.

What remains in this case is when $f(t)$ is reducible over $\Z$. That is, we may assume the characteristic polynomial of $\rho(\gamma)$ admits a factorization of the form
\[
(t-1)f_1(t)f_2(t)\quad\textrm{or}\quad f_1(t)f_2(t),
\]
for polynomials $f_1,f_2\in\Z[t]$ of degree $n_1,n_2\geq 1$ respectively. From the factorization of the characteristic polynomial, it follows then that there is some matrix $P\in\GL(n,\Q)$ so that
\[
P\rho(\gamma)P^{-1}=\begin{cases}
\displaystyle\begin{pmatrix}
1 \\
& C_1\\
&&C_2\end{pmatrix} & \textrm{if $n$ is odd}\\
\displaystyle\begin{pmatrix} C_1\\&C_2\end{pmatrix} & \textrm{if $n$ is even,}
\end{cases}
\]
where for $i=1$ or $2$, $C_i\in \SL(n_i,\Z)$ has characteristic polynomial $f_i$. We now take $A'$ to be the matrix
\[
A'=\begin{cases}
\displaystyle P^{-1}\begin{pmatrix} 1\\&I_{n_1}\\&&C_2\end{pmatrix}P &\textrm{if $n$ is odd}\\
\displaystyle P^{-1}\begin{pmatrix} I_{n_1}\\&C_2\end{pmatrix} P&\textrm{if $n$ is even.}
\end{cases}
\]
Note that in both cases, $\det(A')=\det(C_2)=1$ and $A'$ is in the $K$-centralizer of $\rho(\gamma)$.

By Lemma \ref{integralpower} there is some $j>0$ so that $(A')^j$ has entries in $\Z$. We take $A=(A')^j$. Then by construction, $A\in\SL(n,\Z)$ centralizes $\rho(\gamma)$. Moreover, $A$ is not contained in a principal $\SL(2,\R)$ because the eigenvalues of $A$ are not of the form $\mu^{n-2i-1}$ for some $\mu\in\R$ since $A$ has an eigenvalue $1$ of multiplicity $>1$. Finally, we show that $A$ is in the image of the exponential map. For this, it suffices to show $A'=\exp(X)$ for some $X\in\mathfrak{sl}(n,\R)$ since then, $A=(A')^j=\exp(jX)$. In this case, if we let $\lambda_1,\ldots,\lambda_{n_2}$ be the distinct positive real eigenvalues of $C_2$, then $C_2$ is diagonalizable over $\R$ and we have that
\[
A'=Q^{-1}\exp\begin{pmatrix}0\\&\ddots\\&&0\\&&&\log(\lambda_1)\\&&&&\ddots\\&&&&&\log(\lambda_{n_2})\end{pmatrix}Q
\]
for some $Q\in\GL(n,\R)$. Moreover, $\det(C_2)=1$ so that the trace of the above matrix is $0$, hence $A'=\exp(X)$ for some diagonalizable $X\in\mathfrak{sl}(n,\R)$.

As a result, the matrix $A$ centralizing $\rho(\gamma)$ can be used to bend the representation $\rho$ and produce a representation $\rho^A:\pi_1(S)\to\SL(n,\Z)$ on the Hitchin component with Zariski closure strictly larger than a principal $\SL(2,\R)$. At this point, the resulting bent representation may still have Zariski closure conjugate to $\SO(k+1,k)$ or $\Sp(2k,\R)$ when $n=2k+1$ or $n=2k$ respectively. Thus another bend is still necessary to guarantee that one may produce a final representation with Zariski dense image.

\subsection{Bending out of \texorpdfstring{$\Sp(2k,\R)$}{Sp(2k,R)}}
\label{Spbend}
We handle the even case first which follows the same general strategy as the bend out of $\SO(k+1,k)$, but is practically easier, due to facts about symplectic groups which simplify various steps. Assume that $\rho:\pi_1(S)\to\SL(2k,\Z)$ has Zariski closure $\Sp(2k,\R)$. The starting point is the following Strong Approximation result from the theory of algebraic groups.

\begin{theorem}[Matthews et al., \cite{matthewsetal}]
\label{strongapprox}
Let $G$ be a connected simply-connected absolutely almost simple algebraic group defined over $\Q$ and $\Gamma\leqslant G(\Q)$ a finitely generated Zariski dense subgroup. 

Then for all but finitely many primes $p$, the reduction map
\[
\pi_p:\Gamma\to G(\F_p)
\]
is onto.
\end{theorem}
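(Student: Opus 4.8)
This is a theorem of Matthews, Vaserstein and Weisfeiler \cite{matthewsetal}, used here as a black box; the plan to reconstruct it, in the modern streamlined form built on Nori's structure theory for subgroups of $\GL_m(\F_p)$, is as follows. First I would fix a faithful $\Q$-embedding $G\hookrightarrow\GL_m$; since $\Gamma$ is finitely generated, the entries of a finite generating set together with their inverses lie in the ring of $S$-integers $\O_S$ for some finite set of primes $S$, which I enlarge so that $G$ extends to a smooth affine group scheme $\mathcal G$ over $\O_S$. Then $\Gamma\le\mathcal G(\O_S)$, the reduction $\pi_p:\Gamma\to G(\F_p)$ is defined for $p\notin S$, and it is enough to enlarge $S$ by a finite set and show that $\pi_p(\Gamma)=G(\F_p)$ for the remaining $p$. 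A constructibility argument reduces this further: the Zariski closure of $\Gamma$ spreads out over $\O_S$ to a closed subgroup scheme with generic fiber $G$, and the primes at which the special fiber drops in dimension form a closed set missing the generic point, so after enlarging $S$ once more I may assume that $\pi_p(\Gamma)$ is Zariski dense in $G_{\F_p}$ for every $p\notin S$.

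The next ingredient is that a Zariski-dense subgroup $H\le G(\F_p)$ has order divisible by $p$ once $p$ is large relative to $m$: otherwise Jordan's theorem on finite linear groups of order prime to $p$ gives $H$ an abelian normal subgroup $A$ of index bounded in terms of $m$ alone, whose Zariski closure is then a closed normal subgroup of finite index in the connected group $\overline{H}=G_{\F_p}$, hence all of it, forcing $G_{\F_p}$ to be abelian --- impossible, since $G$ is absolutely almost simple. Consequently the characteristic subgroup $H^{+}:=\langle x\in H:x^{p}=1\rangle$ is nontrivial, and Nori's theorem applies: for $p$ large there is a connected algebraic subgroup $\mathbf H_p\le G_{\F_p}$, generated by a bounded number of unipotent one-parameter subgroups, with $\mathbf H_p(\F_p)^{+}=H^{+}$. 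Being connected and generated by unipotents, $\mathbf H_p$ is (for $p$ large) the Zariski closure of $H^{+}$; since $H^{+}$ is normal in $H$, its closure $\mathbf H_p$ is normalized by $H$, hence by $\overline{H}=G_{\F_p}$, so $\mathbf H_p$ is a nontrivial connected normal subgroup of the almost-simple group $G_{\F_p}$ and must equal it. Thus $H^{+}=G(\F_p)^{+}$, and since $G$ is simply connected, $G(\F_p)^{+}=G(\F_p)$ (a simply connected semisimple group over a finite field is generated by its unipotent elements). Applying this with $H=\pi_p(\Gamma)$ gives $\pi_p(\Gamma)=G(\F_p)$, as desired.

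The hard part is Nori's theorem itself, which is also the reason one must discard finitely many small primes: it identifies, for $p$ large relative to $m$, the subgroup of $\GL_m(\F_p)$ generated by its elements of order $p$ with the group of $\F_p$-points (up to bounded-order factors) of the connected algebraic group obtained by applying truncated $\exp$ and $\log$ to the unipotents it contains, and establishing this is the substantive content of the proof. The two supporting facts --- Jordan's theorem with a bound depending only on $m$, and the generation of $G(\F_p)$ by unipotents for $G$ simply connected --- are classical. As an alternative to the last step one could instead deduce surjectivity of $\Gamma$ onto $G(\Z/p^{k})$ for every $k$, equivalently openness of the closure of $\Gamma$ in $\prod_{p\notin S}G(\Z_p)$, by bootstrapping from surjectivity mod $p$ using the perfectness of the Lie algebra $\mathfrak{g}$; but the route above is the most direct.
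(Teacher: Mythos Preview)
The paper does not prove this theorem at all: it is quoted as a result of Matthews--Vaserstein--Weisfeiler and used as a black box, exactly as you say in your opening sentence. There is therefore nothing in the paper to compare your argument against.

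That said, your sketch is a faithful outline of the modern Nori-based proof and is essentially correct. A couple of places deserve a bit more care if you ever want to flesh it out. First, the ``constructibility argument'' that $\pi_p(\Gamma)$ is Zariski dense in $G_{\F_p}$ for almost all $p$ is not quite as automatic as you suggest: spreading out the Zariski closure of $\Gamma$ gives a group scheme whose generic fiber is $G$, but knowing the special fibers have the right dimension does not by itself force the image of finitely many points to be Zariski dense in them. The usual fix is to choose finitely many elements of $\Gamma$ whose images already witness Zariski density in $G$ (e.g.\ by producing a regular semisimple element and enough conjugates), and then note that the Zariski-density condition on these finitely many matrices is open on $\operatorname{Spec}\O_S$. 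Second, in the Jordan step you should say a word about why the bound on the index of the abelian normal subgroup depends only on $m$ and not on $p$ (this is the content of the characteristic-$p$ version of Jordan's theorem, due to Brauer--Feit or, in the form you need, to Larsen--Pink); the classical Jordan theorem is a characteristic-zero statement. With those two points tightened, the outline is sound.
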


We will also need the following result of Borel's.

\begin{theorem}[Borel]
\label{symplecticcharpoly}
Let $p>4$ and let $R_p(2k)$ be the set of $2k\times 2k$ matrices in $\Sp(2k,\F_p)$ with reducible characteristic polynomial. Then
\[
|R_p(2k)|\leq \left(1-\frac{1}{3k}\right)|\Sp(2k,\F_p)|.
\]
\end{theorem}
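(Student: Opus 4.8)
The plan is to estimate instead the number of $M\in\Sp(2k,\F_p)$ whose characteristic polynomial is \emph{irreducible} (necessarily of degree $2k$); since this set is exactly the complement of $R_p(2k)$ in $\Sp(2k,\F_p)$, a lower bound of $\tfrac{1}{3k}|\Sp(2k,\F_p)|$ on it is equivalent to the theorem. The first step is to describe such an $M$ structurally. If $\mathrm{char}_M=f$ is irreducible of degree $2k$, then $f$ is separable and equals the minimal polynomial, $M$ is regular semisimple, and $V=\F_p^{2k}$ becomes a $1$-dimensional module over the field $E:=\F_p[t]/(f)\cong\F_{p^{2k}}$, with $M$ acting as multiplication by a root $\lambda$ of $f$. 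The symplectic form, being $M$-invariant and nonzero, corresponds to a nondegenerate skew-Hermitian form on $V$ relative to the unique order-two automorphism $z\mapsto z^{p^k}$ of $E$; in particular $\lambda^{-1}=\lambda^{p^k}$, so $f$ is self-reciprocal and $\lambda$ lies in the cyclic group $\mu:=\{z\in E^\times:z^{p^k+1}=1\}$ of order $p^k+1$. Conversely, every generator $\lambda$ of $\F_{p^{2k}}$ over $\F_p$ lying in $\mu$ arises this way. Moreover, any two such $M$ with the same characteristic polynomial are conjugate in $\Sp(2k,\F_p)$ — here one uses surjectivity of the norm $E^\times\to\F_{p^k}^\times$ to match up the skew-Hermitian forms — and the centralizer of such an $M$ in $\Sp(2k,\F_p)$ is $\{z\in E^\times:z\,z^{p^k}=1\}=\mu$, of order $p^k+1$. (Equivalently, these centralizers are exactly the Coxeter maximal tori of $\Sp(2k,\F_p)$.)

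From this I obtain an exact count. Monic irreducible self-reciprocal polynomials of degree $2k$ over $\F_p$ correspond bijectively to Frobenius orbits inside
\[
\mathcal{G}_k:=\{\lambda\in\F_{p^{2k}}^\times:\lambda^{p^k+1}=1,\ \F_p(\lambda)=\F_{p^{2k}}\},
\]
and every element of $\mathcal{G}_k$ has orbit size exactly $2k$, so there are $N_k/(2k)$ such polynomials, where $N_k:=|\mathcal{G}_k|$. Since each such polynomial is the characteristic polynomial of a single $\Sp(2k,\F_p)$-conjugacy class of size $|\Sp(2k,\F_p)|/(p^k+1)$, we get
\[
\#\{M\in\Sp(2k,\F_p):\mathrm{char}_M\text{ irreducible}\}=\frac{N_k}{2k}\cdot\frac{|\Sp(2k,\F_p)|}{p^k+1},
\]
hence $|R_p(2k)|=\big(1-\tfrac{N_k}{2k(p^k+1)}\big)|\Sp(2k,\F_p)|$. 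Thus the theorem is reduced to the elementary inequality $N_k\ge\tfrac{2}{3}(p^k+1)$ whenever $p>4$.

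To prove this, I would bound $N_k$ from below by a union bound over the maximal proper subfields $\F_{p^{2k/q}}$ with $q\mid 2k$ prime: the ``bad'' elements of $\mu$ are those lying in some such subfield, and $\mu\cap\F_{p^{2k/q}}^\times$ is cyclic of order $\gcd(p^k+1,\,p^{2k/q}-1)$. This gcd is at most $2$ when $q=2$ (as $p$ is odd), and at most $2(p^{k/q}+1)\le 2(p^{k/3}+1)$ when $q$ is odd, using $p^{k/q}+1\mid p^k+1$. Summing the at most $\log_2 k$ odd contributions against the main term $p^k+1$ and using $p\ge 5$ then gives $(p^k+1)-N_k\le\tfrac13(p^k+1)$; the tightest case is $k=1$, $p=5$ — where $\Sp(2,\F_5)=\SL(2,\F_5)$ and the only bad elements of $\mu$ are $\pm1$ — and this is precisely what forces the hypothesis $p>4$.

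The main obstacle is the structural input of the first paragraph: verifying that the matrices with a given irreducible characteristic polynomial form a single conjugacy class in $\Sp(2k,\F_p)$ with cyclic centralizer of order $p^k+1$. This is the step where the symplectic form genuinely enters, via the dictionary between $M$-invariant symplectic forms and skew-Hermitian forms on a $1$-dimensional $E$-space together with surjectivity of the norm over finite fields. Once that is in hand, the remaining count is pure bookkeeping, and the number-theoretic estimate — mildly fiddly for composite $k$ with several odd prime divisors, but requiring nothing beyond elementary gcd manipulations — completes the argument.
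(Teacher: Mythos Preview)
The paper does not prove this theorem: it is stated, attributed to Borel, and invoked as a black box in \S\ref{Spbend}. There is therefore nothing in the paper to compare your argument against.

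That said, your outline is a correct and standard route to the result. The identification of the ``irreducible'' locus with the regular conjugacy classes meeting a Coxeter torus of $\Sp(2k,\F_p)$ is right, and the three structural inputs you flag --- that an irreducible self-reciprocal $f$ of degree $2k$ forces its roots into the norm-one subgroup $\mu\subset\F_{p^{2k}}^\times$, that all symplectic matrices with characteristic polynomial $f$ are $\Sp(2k,\F_p)$-conjugate (via the classification of skew-Hermitian forms over finite fields and surjectivity of the norm), and that the centralizer is $\mu$ of order $p^k+1$ --- are all valid and yield the exact count
\[
\#\{M:\mathrm{char}_M\text{ irreducible}\}=\frac{N_k}{2k(p^k+1)}\,|\Sp(2k,\F_p)|.
\]
Your reduction to $N_k\ge\tfrac{2}{3}(p^k+1)$ is then immediate. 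Two small remarks on the estimate: first, for odd primes $q\mid k$ one actually has $\gcd(p^k+1,\,p^{2k/q}-1)=p^{k/q}+1$ on the nose (your factor of $2$ is unnecessary, since $(p^k+1)/(p^{k/q}+1)\equiv 1\pmod{p^{k/q}-1}$), which only makes the union bound cleaner; second, the case analysis for small $k$ is indeed the only place requiring care, and your identification of $k=1$, $p=5$ as the extremal case is correct.
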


By Theorem \ref{strongapprox} for $G=\Sp(2k)$ and $\Gamma=\rho(\pi_1(S))$, we obtain a surjection 
\[
\pi_p\circ\rho:\pi_1(S)\twoheadrightarrow\Sp(2k,\F_p),
\]
for some prime $p$ which, when combined with Theorem \ref{symplecticcharpoly}, implies there exists some loop $\eta\in\pi_1(S)$ so that the characteristic polynomial of $\pi_p(\rho(\eta))$ is irreducible modulo $p$. Consequently, $\rho(\eta)$ has characteristic polynomial irreducible over $\Z$. Now, at the moment, this loop may not be simple, but the following result allows us to remedy this.

\begin{theorem}[Scott, \cite{scottlifting}]
\label{scottsliftingthm}
Every closed curve on $S$ lifts to a simple closed curve in some finite-sheeted cover.
\end{theorem}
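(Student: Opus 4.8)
The plan is to obtain the statement from the fact that surface groups are \emph{subgroup separable} (LERF), which is really the substance of Scott's result; once that is available, the curve-lifting conclusion is nearly formal. Let $\gamma$ be a closed curve on $S$. After a free homotopy I may assume $\gamma$ is the geodesic representative of its free homotopy class for some fixed hyperbolic metric on $S$ (available since $g\geq 2$); it is then an immersed loop with finitely many transverse double points $p_1,\dots,p_k$. I will also assume for now that $\gamma$ is primitive, that is, represents a primitive conjugacy class, the non-primitive case being handled by the same argument applied to a primitive root (see the end of the last paragraph). Choose a basepoint $x_0$ on $\gamma$ away from the $p_i$ and set $g=[\gamma]\in\pi_1(S,x_0)$.

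First I record what each double point detects in the universal cover. Lift $\gamma$ to a complete geodesic $\widetilde\gamma\colon\R\to\H^2$, the axis of $g$, parametrized so that $\widetilde\gamma(s+1)=g\cdot\widetilde\gamma(s)$; this is an embedded line since $g$ is hyperbolic. A double point $\gamma(s)=\gamma(t)$ with $s\neq t$ in $\R/\Z$ corresponds to a unique $h\in\pi_1(S)$ with $\widetilde\gamma(t)=h\cdot\widetilde\gamma(s)$, and $h\notin\langle g\rangle$ (otherwise injectivity of $\widetilde\gamma$ forces $t\equiv s$). So the $k$ double points yield finitely many elements $h_1,\dots,h_k\in\pi_1(S)\setminus\langle g\rangle$. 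Now suppose $H\leq\pi_1(S)$ has finite index, contains $g$, and contains none of $h_1^{\pm1},\dots,h_k^{\pm1}$, and let $p\colon S_H\to S$ be the corresponding cover, $S_H=H\backslash\H^2$. Since $g\in H$, the lift $\overline\gamma$ of the loop $\gamma$ based at $\overline{x_0}=H\widetilde x_0$ is again a loop with $p\circ\overline\gamma=\gamma$; any self-intersection $\overline\gamma(s)=\overline\gamma(t)$ projects to a double point of $\gamma$ and hence satisfies $\widetilde\gamma(t)=h_i\widetilde\gamma(s)$ for some $i$, while $\overline\gamma(s)=\overline\gamma(t)$ means $H\widetilde\gamma(s)=Hh_i\widetilde\gamma(s)$, which (using that $\pi_1(S)$ acts freely on $\H^2$) forces $h_i\in H$ --- a contradiction. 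Thus $\overline\gamma$ is a simple closed curve lifting $\gamma$, and everything reduces to producing such an $H$.

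Producing $H$ is precisely the assertion that the cyclic subgroup $\langle g\rangle$ is separable in $\pi_1(S)$: if $\langle g\rangle$ is closed in the profinite topology then for each $i$ there is a finite-index $H_i\supseteq\langle g\rangle$ with $h_i\notin H_i$, and $H:=\bigcap_i H_i$ works. This separability step is where Scott's input is needed, and I expect it to be the only real obstacle; I would prove the stronger statement that every finitely generated subgroup $\Lambda\leq\pi_1(S)$ is separable. The argument is geometric: $\Lambda$ is a finitely generated, hence geometrically finite, Fuchsian group with no parabolics (as $S$ is closed), so its convex core is a compact subsurface-with-boundary $Y\subset\H^2/\Lambda$ (thickening the core slightly when $\Lambda$ is cyclic), and the covering $\H^2/\Lambda\to S$ restricts on $Y$ to a $\pi_1$-injective immersion into $S$. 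The crux is then the lemma --- itself due to Scott --- that a compact surface immersed in $S$ lifts to an embedded incompressible subsurface in some finite cover $\widehat S\to S$, proved by gluing finitely many copies of the immersed piece along their boundary circles so as to ``complete'' it. Once $\Lambda=\pi_1(Y)$ is realized as the fundamental group of an embedded incompressible subsurface of $\widehat S$, a standard covering argument --- passing to a further finite cover of $\widehat S$ that retracts onto a copy of this subsurface --- shows $\Lambda$ is separable in $\pi_1(\widehat S)$, and hence in $\pi_1(S)$ since $\widehat S\to S$ is finite. Feeding $\Lambda=\langle g\rangle$ back into the previous paragraph finishes the proof. Finally, if $\gamma$ is a proper power $\delta^m$ (or null-homotopic) one argues as above but chooses $H$ to contain $g=\delta^m$ while excluding $\delta,\dots,\delta^{m-1}$ as well --- possible since $\delta$ has infinite order and $\pi_1(S)$ is residually finite --- so that the connected lift of $\gamma$ wraps an embedded preimage curve with degree $m$ rather than retracing it.
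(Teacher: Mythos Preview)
The paper does not prove this theorem; it is quoted from Scott and used as a black box, so there is nothing in the paper to compare your argument against. What you have written is essentially the standard proof: reduce curve-lifting to separability of the cyclic subgroup $\langle g\rangle$, then invoke LERF for surface groups. The reduction in your second paragraph is clean and correct for primitive $\gamma$.

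Two minor points. Your last sentence on the non-primitive case is phrased confusingly --- if the lift ``wraps an embedded preimage curve with degree $m$'' then it is not simple. What actually happens when $H$ contains $\delta^m$ but none of $\delta,\dots,\delta^{m-1}$ is that the relevant component of $p^{-1}(\delta)$ is a simple closed curve in $S_H$ that covers $\delta$ with degree $m$, and the lift $\overline\gamma$ traverses it exactly \emph{once}. In fact no separate treatment is needed: your primitive-case argument applies verbatim to $\gamma=\delta^m$, since the finitely many self-intersection witnesses $h_i\notin\langle g\rangle$ now automatically include $\delta,\dots,\delta^{m-1}$, and separability of $\langle g\rangle$ --- which follows from LERF, not merely from residual finiteness as your parenthetical suggests --- handles them all at once. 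Second, your one-line account of Scott's embedding lemma (``gluing finitely many copies of the immersed piece along their boundary'') is a placeholder rather than a proof; Scott's actual mechanism passes through right-angled pentagon reflection groups, and that is where the substantive work lies.
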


We now pass to a finite sheeted cover $\widetilde{S}\to S$ where $\eta$ lifts simply and restrict our representation $\rho$, to the subgroup $\pi_1(\widetilde{S})\leqslant\pi_1(S)$. By passing to a further finite sheeted cover if needed, we may also assume that the loop $\eta\in\pi_1(\widetilde{S})$ is nonseparating. The following lemma is needed before then performing the bend out of $\Sp(2k,\R)$.

\begin{lemma}
\label{samezariskiclosure}
The subgroup $\rho(\pi_1(\widetilde{S}))\leqslant\rho(\pi_1(S))$ has Zariski closure $\Sp(2k,\R)$.
\end{lemma}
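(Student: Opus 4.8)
The plan is to show that the Zariski closure of $\rho(\pi_1(\widetilde{S}))$ is again $\Sp(2k,\R)$ by using the fact that $\widetilde{S}\to S$ is a \emph{finite} cover, so that $\pi_1(\widetilde{S})$ has finite index in $\pi_1(S)$. The containment $\rho(\pi_1(\widetilde{S}))\subseteq\Sp(2k,\R)$ is immediate since $\rho(\pi_1(\widetilde{S}))\leqslant\rho(\pi_1(S))$, so the real content is the reverse inclusion of Zariski closures. First I would recall the general principle that if $\Lambda$ is a finite-index subgroup of a group $\Gamma$ and $\Gamma$ acts on a variety (or sits inside a linear algebraic group), then $\operatorname{Zcl}(\Lambda)$ has finite index in $\operatorname{Zcl}(\Gamma)$; in particular $\operatorname{Zcl}(\Lambda)$ contains the identity component $\operatorname{Zcl}(\Gamma)^{\circ}$. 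Since $\Sp(2k,\R)$ is connected, this already forces $\operatorname{Zcl}(\rho(\pi_1(\widetilde{S})))=\Sp(2k,\R)$, provided we know the left-hand side is not somehow degenerate — but it is a subgroup of the connected group $\Sp(2k,\R)$ containing a finite-index subgroup of a Zariski-dense subgroup, so this is exactly what the finite-index principle gives.

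A cleaner route that avoids invoking general facts about Zariski closures of finite-index subgroups is to use the properties of Hitchin representations directly. Since $\rho$ lies on the Hitchin component of $S$, its restriction $\rho|_{\pi_1(\widetilde{S})}$ lies on the Hitchin component of $\widetilde{S}$ (restriction of a Hitchin representation to a finite-index surface subgroup is again Hitchin, as the Fuchsian locus behaves this way and Hitchin components are path components). Therefore Theorem \ref{guichard} applies to $\rho|_{\pi_1(\widetilde{S})}$, and its Zariski closure $H$ is one of the groups on Guichard's list. Since $H\subseteq\Sp(2k,\R)$ and $\Sp(2k,\R)$ appears on the list only as option (ii), while $\tau_n(\SL(2,\R))\subseteq\Sp(2k,\R)$ and $G_2\not\subseteq\Sp(2k,\R)$ in general and $\SL(2k,\R)\not\subseteq\Sp(2k,\R)$, the only possibilities for $H$ are $\tau_{2k}(\SL(2,\R))$ or $\Sp(2k,\R)$ itself. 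To rule out the principal $\SL(2,\R)$, I would invoke strong irreducibility from Theorem \ref{labourie:purelyloxodromic}: if $H$ were conjugate to $\tau_{2k}(\SL(2,\R))$, then already $\rho(\pi_1(\widetilde{S}))$ would preserve a symmetric bilinear form (since $\tau_{2k}(\SL(2,\R))\subseteq\SO(k+1,k)$ as noted in the proof of Lemma \ref{largerclosure}). But $\rho(\pi_1(S))$ has Zariski closure exactly $\Sp(2k,\R)$, which preserves no symmetric form and preserves a unique (up to scalar) alternating form $J$; since $\pi_1(\widetilde{S})$ has finite index in $\pi_1(S)$, a Schur's-lemma argument identical to the one in Lemma \ref{largerclosure} (the centralizer of the absolutely irreducible subgroup $\rho(\pi_1(\widetilde S))$ is scalar) shows that any form fixed by $\rho(\pi_1(\widetilde S))$ is fixed by all of $\rho(\pi_1(S))$, giving a contradiction.

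The main obstacle I anticipate is making the reduction ``restriction of a Hitchin representation to a finite-index subgroup is Hitchin'' fully rigorous in the orbifold/cover language of the paper, since $\widetilde S$ is itself a closed surface here and the statement is standard, but one wants to be careful that the cover $\widetilde S\to S$ constructed via Theorem \ref{scottsliftingthm} and the further covers taken to make $\eta$ nonseparating do not destroy this. In practice this is immediate from connectivity of Hitchin components together with the fact that the Fuchsian representation $\tau_n\rho_0$ restricts to the Fuchsian representation $\tau_n(\rho_0|_{\pi_1(\widetilde S)})$, but I would state it as such and move on. The Schur's lemma step is routine given that $\rho(\pi_1(\widetilde S))$ is irreducible (which follows from strong irreducibility of $\rho$, Theorem \ref{labourie:purelyloxodromic}), so no genuine difficulty arises there.
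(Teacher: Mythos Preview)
Your first approach is correct and is exactly what the paper does: the paper writes out the coset decomposition $\rho(\pi_1(S))\subseteq\bigcup_i g_iH$ and observes that if $H$ were a principal $\SL(2,\R)$ then $\Sp(2k,\R)$ would be a finite union of $3$-dimensional cosets, impossible by dimension. This is precisely the proof of the general principle you cite (finite-index subgroups have Zariski closure containing the identity component), specialized to the case at hand.

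Your second ``cleaner'' approach, however, contains a genuine error. You claim that $\tau_{2k}(\SL(2,\R))\subseteq\SO(k+1,k)$, citing Lemma~\ref{largerclosure}, but that lemma says the opposite: for \emph{even} $n=2k$ the principal $\SL(2,\R)$ sits inside $\Sp(2k,\R)$, and the invariant bilinear form on the irreducible $2k$-dimensional representation of $\SL(2,\R)$ is alternating, not symmetric. So if $H$ were a principal $\SL(2,\R)$, the only form it preserves (up to scalar, by Schur) is the \emph{same} alternating form $J$ already preserved by $\rho(\pi_1(S))$, and your Schur's-lemma contradiction evaporates: you would conclude that $\rho(\pi_1(S))$ preserves $J$, which it does. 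The form-theoretic route therefore cannot rule out the principal $\SL(2,\R)$ here, and you must fall back on the finite-index/dimension argument---which is fine, since that is your first approach and the paper's.
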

\begin{proof}
Let $H$ denote the Zariski closure of $\rho(\pi_1(\widetilde{S}))$. Clearly $H\leqslant\Sp(2k,\R)$, but as $\widetilde{S}$ is still a closed surface and $\rho|_{\pi_1(\widetilde{S})}$ lies on the Hitchin component of $\widetilde{S}$, Guichard's classification (\ref{guichard}) implies that either $H=\Sp(2k,\R)$ or $H$ is a principal $\SL(2,\R)$.

But $H$ is of finite index in $G$, so if $g_1,\ldots,g_k$ are finitely many coset representatives for $G/H$, then $g_1 H\cup\ldots\cup g_kH$ is a Zariski-closed subset containing $G$. If $H$ was a principal $\SL(2,\R)$, this would imply that $\Sp(2k,\R)$ is a union of finitely many principal $\SL(2,\R)$'s and simply for dimension reasons this is even impossible.
\end{proof}

By this lemma, we have a representation $\rho:\pi_1(\widetilde{S})\to\SL(n,\Z)$ which is Zariski dense in $\Sp(2k,\R)$ and on the Hitchin component of $\widetilde{S}$. Moreover, the curve $\eta\in\pi_1(S)$ lifts simply in $\widetilde{S}$ and the characteristic polynomial of $\eta$ is irreducible over $\Z$. In light of Lemma \ref{irreduciblecharpoly}, there is some bending matrix $A\in\SL(n,\Z)$ so that the bent representation $\rho^A$ is Zariski dense in $\SL(n,\R)$.

\subsection{Bending out of \texorpdfstring{$\SO(k+1,k)$}{SO(k+1,k)}}
\label{SObend}
In the odd $n$ case, the only possible representations remaining are the $\Q$-integral ones with Zariski closure conjugate to $\SO(k+1,k)$. Suppose that $\rho:\pi_1(S)\to\SL(2k+1,\Z)$ has Zariski closure $\SO(J;\R)$ where $J$ is some symmetric bilinear form of signature $(k+1,k)$. In this case, we prove the following fact.

\begin{lemma}
\label{SObendlemma}
There exists a $\eta\in\pi_1(S)$ so that $\rho(\eta)$ has characteristic polynomial $(t-1)f(t)\in\Z[t]$ with $f(t)$ irreducible over $\Z$.
\end{lemma}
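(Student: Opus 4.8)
\emph{Proof proposal.} The plan is to follow the template of \S\ref{Spbend}: use strong approximation to pass to a reduction $\SO(J;\F_p)$ and then apply a counting estimate for characteristic polynomials, the new wrinkle being that $\SO_{2k+1}$, unlike $\Sp_{2k}$, is not simply connected. I would begin by recording the elementary fact that every element $g$ of the algebraic group $\SO_{2k+1}$ has $1$ as an eigenvalue: since $g^{T}=Jg^{-1}J^{-1}$, the matrices $g$ and $g^{-1}$ have the same characteristic polynomial, so writing $p(t)=\det(tI-g)$ we get $p(t)=\det(tI-g^{-1})=-t^{2k+1}p(1/t)$, and evaluating at $t=1$ forces $p(1)=0$. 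Consequently the characteristic polynomial of $\rho(\eta)$ is automatically of the form $(t-1)f(t)$ for some monic $f\in\Z[t]$ of degree $2k$, and the content of the lemma is that $f$ can be made irreducible. Since the reduction modulo $p$ of a reducible monic integer polynomial is reducible, it suffices to exhibit a single $\eta\in\pi_1(S)$ and a prime $p$ for which $f$ reduces modulo $p$ to an irreducible polynomial of degree $2k$ (such a reduction is automatically prime to $t-1$, so the factorization over $\Z$ is the clean one $(t-1)\cdot(\text{irreducible})$).

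To run strong approximation I would first move to the simply connected cover. The invariant form $J$ may be taken with rational coefficients (by strong irreducibility of $\rho$ it is unique up to scale, hence fixed by a $\Q$-rational form), so $\operatorname{Spin}(J)$ is a connected, simply connected, absolutely almost simple $\Q$-group equipped with a $\Q$-isogeny $\phi\colon\operatorname{Spin}(J)\to\SO(J)$. The subgroup $\rho(\pi_1(S))$ may fail to lift to $\operatorname{Spin}(J;\Q)$, but the two obstructions --- the class in $H^{2}(\pi_1(S);\Z/2\Z)\cong\Z/2\Z$ governing whether lifts of the generators assemble into a homomorphism, and the spinor norms of the generators in $\Q^{\times}/(\Q^{\times})^{2}$ governing whether those lifts can be chosen $\Q$-rational --- are both killed after passing to a suitable finite cover $\widehat S\to S$ (the first vanishes on any even-degree cover, the second on the finite-index subgroup on which the spinor norm is trivial). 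On such a cover $\rho|_{\pi_1(\widehat S)}$ lifts to $\widehat\rho\colon\pi_1(\widehat S)\to\operatorname{Spin}(J;\Q)$; and $\rho|_{\pi_1(\widehat S)}$ still has Zariski closure $\SO(J;\R)$ by the argument of Lemma \ref{samezariskiclosure} (its Zariski closure is $\SO(J)$ or a principal $\SL(2,\R)$, the latter being impossible on dimension grounds), so $\widehat\rho$ is Zariski dense in $\operatorname{Spin}(J)$.

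Now apply Theorem \ref{strongapprox} to $\widehat\rho$: for some prime $p$ the reduction $\pi_p\circ\widehat\rho\colon\pi_1(\widehat S)\twoheadrightarrow\operatorname{Spin}(J)(\F_p)$ is onto, and composing with $\phi$ gives a surjection of $\pi_1(\widehat S)$ onto the image $\Omega$ of $\operatorname{Spin}(J)(\F_p)$ in $\SO(J)(\F_p)$ (an odd-dimensional orthogonal group over $\F_p$, hence intrinsic to $2k+1$ and $p$). The last ingredient is the analog of Borel's Theorem \ref{symplecticcharpoly} for odd orthogonal groups: for $p$ large, a proportion of the elements of $\Omega$ bounded below by a positive constant depending only on $k$ have characteristic polynomial equal to $(t-1)$ times an irreducible polynomial of degree $2k$ (these are the regular semisimple elements lying in a maximal torus of order $p^{k}+1$, and the estimate follows by Borel's method). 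Combining surjectivity with this estimate yields $\eta\in\pi_1(\widehat S)\subseteq\pi_1(S)$ whose reduction has characteristic polynomial $(t-1)\bar f$ with $\bar f$ irreducible of degree $2k$ over $\F_p$; since $(t-1)$ already divides the characteristic polynomial of $\rho(\eta)$ over $\Z$, its cofactor $f$ reduces to $\bar f$ modulo $p$ and is therefore irreducible over $\Z$, as desired.

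The step I expect to be the main obstacle is precisely the one absent from the symplectic case: because $\SO_{2k+1}$ is not simply connected, Theorem \ref{strongapprox} cannot be applied to $\SO(J)$ directly, and one must instead replace it by $\operatorname{Spin}(J)$ and produce a $\Q$-rational lift of $\rho$, which is what forces the passage to a finite cover here. The orthogonal-group version of Borel's characteristic-polynomial estimate is the other place requiring care and an appropriate citation.
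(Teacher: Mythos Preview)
Your argument is correct in outline, but it takes a genuinely different route from the paper's. The paper never lifts to $\operatorname{Spin}(J)$ or passes to a finite cover inside the proof of this lemma; instead it invokes a version of strong approximation due to Weisfeiler which applies to the non--simply-connected group $\SO(J)$ directly and yields that for all but finitely many $p$ the image of $\pi_p\circ\rho$ already contains the commutator subgroup $\Omega(2k+1,p)\leqslant\SO(2k+1,p)$. The paper then cites an existence statement (Theorem 3.8 of \cite{longthis:2020}): for every prime $p$ there is an element of $\Omega(2k+1,p)$ with characteristic polynomial $(t-1)\bar f(t)$ for $\bar f$ irreducible mod $p$. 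That immediately produces the desired $\eta$.

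By contrast you (i) kill the two lifting obstructions by passing to a finite cover $\widehat S\to S$, (ii) apply Theorem \ref{strongapprox} to the Zariski-dense lift $\widehat\rho$ into $\operatorname{Spin}(J)$, and (iii) appeal to a \emph{proportion} estimate in the spirit of Theorem \ref{symplecticcharpoly} for odd orthogonal groups. Step (i) is correct as you describe it (the $H^2$ obstruction dies on even-degree covers since $\pi^*$ on $H^2$ is multiplication by the degree, and the spinor-norm obstruction dies on a finite-index subgroup because the image in $\Q^\times/(\Q^\times)^2$ is finite), and the passage to $\widehat S$ is harmless for the lemma since $\pi_1(\widehat S)\leqslant\pi_1(S)$. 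Your approach has the virtue of using only the simply-connected form of strong approximation already quoted in the paper, at the cost of the extra cover and the Spin bookkeeping; the paper's approach is shorter but requires the Weisfeiler variant. For step (iii), note that the paper needs only \emph{existence} of one good element in $\Omega(2k+1,p)$, not a positive-proportion bound; if you follow your route you should still supply or cite the orthogonal analog you invoke, but a single-element existence statement (as in \cite{longthis:2020}) suffices and is easier.
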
 

The required loop is constructed in essentially the same manner as done in \S $3.2$ of \cite{longthis:2020}, but we recap the construction, and note the minor differences here. Again, the first step will be to look at the reduction map $\SO(J,\Z)\to\SO(J,\F_p)$ for some prime $p$. In this case, over odd-dimensional vector spaces over finite fields, there is a unique orthogonal group up to isomorphism, dependent only on the dimension and the prime $p$ (Theorem $5.8$ in \cite{suzukigroups}). Thus we will make the identification $\operatorname{O}(J,\F_p)=\operatorname{O}(2k+1,p)$. In this case, we let $\SO(2k+1,p)$ denote the (unique up to isomorphism) special orthogonal group over $\F_p$ and also set $\Omega(2k+1,p)=[\operatorname{O}(2k+1,p),\operatorname{O}(2k+1,p)]\leqslant\SO(2k+1,p)$ to be the commutator subgroup. $\Omega(2k+1,p)$ is a simple subgroup of index $4$ in $\operatorname{O}(2k+1,p)$ (cf. \cite{suzukigroups}, p. 383--384).

Just as in \S\ref{Spbend}, the curve in Lemma \ref{SObendlemma} with the required characteristic polynomial will be constructed by looking at the characteristic polynomials modulo some prime $p$. However, we can't simply apply Strong Approximation as is stated in Theorem \ref{strongapprox} simply due to the fact that the algebraic group $\SO(J)$ is not simply connected. Instead, the following corollary of Strong Approximation (cf. \cite{weisfeiler}) still provides us with enough in the image of the reduction map to construct the needed $\eta$.

\begin{theorem}
For all but finitely many primes $p$, the image of the composition
\[
\pi_p\circ\rho:\pi_1(S)\to\SO(2k+1,p)
\]
contains the subgroup $\Omega(2k+1,p)$.
\end{theorem}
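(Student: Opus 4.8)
The plan is to reduce the statement to the simply connected case, where Theorem \ref{strongapprox} applies directly. The group $\SO(J)$ is not simply connected, so let $\pi\colon\operatorname{Spin}(J)\to\SO(J)$ be its simply connected cover, a central isogeny with kernel $\mu_2$ defined over $\Q$ (indeed over $\Z[\tfrac{1}{2\det J}]$). Then $G:=\operatorname{Spin}(J)$ is connected, simply connected and absolutely almost simple over $\Q$, hence a legitimate input to Theorem \ref{strongapprox}. Write $\Gamma:=\rho(\pi_1(S))$; by hypothesis it is a finitely generated subgroup of $\SO(J)(\Q)$ whose Zariski closure is $\SO(J;\R)$, so $\Gamma$ is Zariski dense in $\SO(J)$ as a $\Q$-group.

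First I would replace $\Gamma$ by a finite-index subgroup that lifts along $\pi$. The image of $\pi$ on $\Q$-points is the kernel of the spinor norm $\theta\colon\SO(J)(\Q)\to\Q^\times/(\Q^\times)^2$, a group homomorphism; since $\Gamma$ is finitely generated, $\theta(\Gamma)$ is a finitely generated subgroup of the elementary abelian $2$-group $\Q^\times/(\Q^\times)^2$ and is therefore finite. Hence $\Gamma':=\Gamma\cap\ker\theta$ has finite index in $\Gamma$, so it is finitely generated and (the ambient group $\SO(J)$ being connected) still Zariski dense in $\SO(J)$, and it lifts to a subgroup $\widetilde\Gamma\leqslant G(\Q)$ with $\pi(\widetilde\Gamma)=\Gamma'$. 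Since $1\to F\to\widetilde\Gamma\to\Gamma'\to1$ with $F\leqslant\mu_2(\Q)=\{\pm1\}$ finite and central, $\widetilde\Gamma$ is finitely generated; and it is Zariski dense in $G$, because its Zariski closure $H$ satisfies $\pi(H)=\overline{\Gamma'}=\SO(J)$, forcing $\dim H=\dim G$ and hence $H=G$ by connectedness. Theorem \ref{strongapprox} then gives a finite set of primes outside of which the reduction $\widetilde\Gamma\to G(\F_p)$ is onto.

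Finally I would transport this surjectivity across $\pi$. Choose $N$ with $\rho(\pi_1(S))\subset\SL(2k+1,\Z[\tfrac1N])$, $\widetilde\Gamma\leqslant G(\Z[\tfrac1N])$ and $2\det J\in\Z[\tfrac1N]^\times$; then for $p\nmid N$ the isogeny reduces to $\pi_p\colon G(\F_p)\to\SO(J)(\F_p)$ and fits in a square commuting with the reductions of $\widetilde\Gamma$ and of $\Gamma'$. For such (odd) $p$ one has $G(\F_p)=\operatorname{Spin}_{2k+1}(\F_p)$, and the image of $\pi_p$ on $\F_p$-points is the kernel of the spinor norm $\SO_{2k+1}(\F_p)\to\F_p^\times/(\F_p^\times)^2$, which is exactly the commutator subgroup $\Omega(2k+1,p)$ — this is the classical computation of the connecting map in the Galois cohomology sequence of $1\to\mu_2\to\operatorname{Spin}(J)\to\SO(J)\to1$ over $\F_p$ (cf. \cite{suzukigroups}), via the dimension-dependent identification $\SO(J,\F_p)\cong\SO(2k+1,p)$. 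Chasing the commutative square, the image of $\pi_p\circ\rho\colon\pi_1(S)\to\SO(2k+1,p)$ contains the image of $\Gamma'$, which equals $\pi_p(G(\F_p))=\Omega(2k+1,p)$; excluding $p=2$, the primes dividing $N$, and the exceptional primes of Theorem \ref{strongapprox} — a finite set altogether — proves the claim. The step I expect to demand the most care is the spin-cover bookkeeping: identifying $\mathrm{im}\bigl(\operatorname{Spin}_{2k+1}(\F_p)\to\SO_{2k+1}(\F_p)\bigr)$ with $\Omega(2k+1,p)$ and checking compatibility of all the reduction maps. This is exactly the content of Weisfeiler's refinement of Strong Approximation \cite{weisfeiler}, which one could alternatively invoke directly for the non-simply-connected group $\SO(J)$ to sidestep the explicit bookkeeping.
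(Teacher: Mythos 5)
Your proposal is correct and follows essentially the same route as the paper, which states this as a corollary of Strong Approximation and simply cites Weisfeiler for the non-simply-connected case; your spin-cover argument (lift a finite-index subgroup killed by the spinor norm, apply Theorem \ref{strongapprox} to $\operatorname{Spin}(J)$, and identify the image of $\operatorname{Spin}_{2k+1}(\F_p)\to\SO_{2k+1}(\F_p)$ with $\Omega(2k+1,p)$) is precisely the standard derivation of that corollary. The details you supply -- finiteness of $\theta(\Gamma)$ in $\Q^\times/(\Q^\times)^2$, Zariski density of the lift, and compatibility of the reductions -- are all sound, so your writeup in fact fills in the bookkeeping the paper leaves to the reference.
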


Lemma \ref{SObendlemma} then follows immediately from the previous theorem and the following result whose proof is Theorem $3.8$ of \cite{longthis:2020}.

\begin{lemma}
For every prime $p$, there is a matrix in $\Omega(2k+1,p)$ with a characteristic polynomial of the form $(t-1)\overline{f}(t)$ where $\overline{f}$ is irreducible modulo $p$.
\end{lemma}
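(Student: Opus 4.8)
The plan is to build the required matrix from the non-split maximal torus of a minus-type orthogonal space, which is the argument of Theorem $3.8$ of \cite{longthis:2020}. Write $q=p^{k}$, and fix a $(2k+1)$-dimensional non-degenerate orthogonal $\F_{p}$-space $V$ underlying $O(2k+1,p)$; since we only care about the element up to conjugacy and, in odd dimension, the form is unique up to rescaling, there is no loss in choosing $V$ conveniently. Note first that the required shape is essentially forced: if $g\in O(2k+1,p)$ has characteristic polynomial $(t-1)\overline f(t)$ with $\overline f$ irreducible of degree $2k$, then $g$ is semisimple, its $1$-eigenspace is a non-degenerate line $\langle v\rangle$ (eigenspaces for eigenvalues $\lambda,\mu$ are orthogonal unless $\lambda\mu=1$), and $W:=v^{\perp}$ is a non-degenerate $g$-invariant $2k$-space on which $g$ acts irreducibly; chasing the Galois orbit of an eigenvalue $\alpha$, and using that the orbit is stable under $\lambda\mapsto\lambda^{-1}$, gives $\alpha^{q}=\alpha^{-1}$, so $W$ must be of minus type. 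It therefore suffices to realize such a configuration lying inside $\Omega$.

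I would model the minus-type space $W$ as $\F_{q^{2}}$ regarded as a $2k$-dimensional $\F_{p}$-vector space, with quadratic form $Q(x)=\mathrm{Tr}_{\F_{q}/\F_{p}}\!\big(c\cdot N_{\F_{q^{2}}/\F_{q}}(x)\big)$ for a suitable $c\in\F_{q}^{\times}$; a short count of isotropic vectors confirms this is the minus-type form, and for every $\alpha$ in the norm-one subgroup $\mu_{q+1}=\ker N_{\F_{q^{2}}/\F_{q}}$ the multiplication map $m_{\alpha}\colon x\mapsto\alpha x$ is a $Q$-isometry whose characteristic polynomial over $\F_{p}$ is the minimal polynomial of $\alpha$ raised to the power $2k/[\F_{p}(\alpha):\F_{p}]$. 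What is needed, then, is an $\alpha\in\mu_{q+1}$ of degree exactly $2k$ over $\F_{p}$, equivalently of multiplicative order $e$ with $\mathrm{ord}_{e}(p)=2k$. Here I would invoke Zsigmondy's theorem: $p^{2k}-1$ has a primitive prime divisor $r$ (a prime with $\mathrm{ord}_{r}(p)=2k$) except for $2k=6,\ p=2$ and for $2k=2$ with $p+1$ a power of $2$; since $r\mid p^{2k}-1$ but $r\nmid p^{k}-1$, such an $r$ divides $p^{k}+1$, so any $\alpha$ of order $r$ lies in $\mu_{q+1}$ and has degree $2k$. In the first exceptional case $\mu_{9}\subset\F_{64}$ has $9-3=6$ elements outside $\F_{4}$, and in the second $\mu_{p+1}\cap\F_{p}^{\times}=\{\pm1\}$ leaves $p-1\ge 2$ elements of degree $2$, so these are checked directly. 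Taking $g:=m_{\alpha}\oplus\mathrm{id}_{\langle v\rangle}$ on $V=W\perp\langle v\rangle$ yields characteristic polynomial $(t-1)\overline f(t)$ with $\overline f$ irreducible mod $p$ of degree $2k$ and $\overline f(1)\ne 0$, since $\alpha\ne 1$.

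It remains to place $g$ in $\Omega(2k+1,p)$. The key point is that $\alpha$ has odd order $r$ (indeed $r\nmid p-1$ forces $r$ odd when $p$ is odd, while $p^{k}+1$ is odd when $p=2$), so $\alpha$ is a square in the cyclic group $\mu_{q+1}$, say $\alpha=\gamma^{2}$, and hence $g=(m_{\gamma}\oplus\mathrm{id})^{2}$ is a square in $O(2k+1,p)$. For $p$ odd, $O(2k+1,p)/\Omega(2k+1,p)$ is a group of exponent $2$ (cut out by the determinant and the spinor norm), so $g$ lies in the kernel of both maps, i.e. in $\Omega(2k+1,p)$. For $p=2$ one instead uses $O(2k+1,2)\cong\Sp(2k,2)$: when $k\ge 3$ this group is simple, so $\Omega(2k+1,2)=O(2k+1,2)\ni g$; when $k\in\{1,2\}$ the element $g$ has order $r\in\{3,5\}$, hence lies in the commutator subgroup ($A_{3}\subset S_{3}$, resp. $A_{6}\subset S_{6}$), which is $\Omega$. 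This completes the construction.

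The main obstacle — and the only place where real care is needed — is this last step: showing membership in $\Omega$ rather than merely in $O$ or $\SO$, uniformly across all primes. Passing to an element of odd order (so a square in the torus, hence automatically in $\Omega$) resolves the odd-characteristic case cleanly, but it forces one to check that the odd part of the non-split torus still contains an element of full degree $2k$, which is exactly where Zsigmondy's theorem and its two small exceptional parameters enter, and the prime $p=2$ must be handled separately through the exceptional isomorphism with $\Sp(2k,2)$. By contrast, producing an isometry with irreducible characteristic polynomial is routine once the field model for $O^{-}(2k,p)$ is in place.
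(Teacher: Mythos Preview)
The paper does not itself prove this lemma; it defers to Theorem~3.8 of \cite{longthis:2020}. Your proposal reconstructs that argument, so at the level of strategy there is nothing to compare.

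Your construction is correct in the generic case: model the minus-type $2k$-space as $\F_{q^2}$ with $q=p^k$, take $\alpha\in\mu_{q+1}$ whose order is a Zsigmondy prime $r$ for $p^{2k}-1$ (so $\alpha$ has degree $2k$ over $\F_p$), and observe that $r$ is odd, whence $g=m_\alpha\oplus\mathrm{id}$ is a square in $O(2k+1,p)$ and therefore lies in $\Omega$. Your treatment of the exception $(p,2k)=(2,6)$ and of characteristic $2$ via $O(2k+1,2)\cong\Sp(2k,2)$ is also fine.

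There is one gap. In the remaining Zsigmondy exception, $k=1$ with $p+1$ a power of $2$, you produce degree-$2$ elements of $\mu_{p+1}$, but your $\Omega$-membership argument (odd order, hence a square) no longer applies, since every element of $\mu_{p+1}$ then has $2$-power order. For $p\ge 7$ this is repairable: choose $\alpha$ of order at most $(p+1)/2$, which still has degree $2$ and is a square in $\mu_{p+1}$. For $p=3$, however, the statement as written is actually false: the only monic irreducible quadratic over $\F_3$ with constant term $1$ is $t^2+1$, so any element of $\SO(3,\F_3)$ with characteristic polynomial $(t-1)\overline f(t)$ and $\overline f$ irreducible must have order $4$, whereas $\Omega(3,3)\cong\PSL(2,3)\cong A_4$ has no element of order $4$. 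This does not harm the paper's application, since for $n=3$ the group $\SO(2,1)$ coincides with the principal $\SL(2,\R)$ and the lemma is only invoked for $k\ge 2$; but your write-up should flag the restriction.
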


By these last two results, we may find some $\eta\in\pi_1(S)$ so that $\pi_p(\rho(\eta))$ has characteristic polynomial of the form $(t-1)\overline{f}(t)$ for some $\overline{f}$ irreducible modulo $p$. It follows that $\rho(\eta)$ has characteristic polynomial of the form $(t-1)f(t)$ for some $f$ irreducible over $\Z$. As before, $\eta$ may not be simple, but Theorem \ref{scottsliftingthm} again lets us pass to a finite sheeted cover $\widetilde{S}\to S$ where $\eta$ lifts simply to a nonseparating curve. Again, as in Lemma \ref{samezariskiclosure}, $\rho(\pi_1(\widetilde{S}))\leqslant\rho(\pi_1(S))$ also has Zariski closure $\SO(J;\R)$ and so we may perform the bend on the restriction $\rho:\pi_1(\widetilde{S})\to\SL(n,\Z)$, along the simple closed curve $\eta$, to get a representation $\rho^A:\pi_1(\widetilde{S})\to\SL(n,\Z)$ with Zariski dense image in $\SL(n,\R)$ via Lemma \ref{irreduciblecharpoly}. This last remaining case then completes the proof of Theorem \ref{maintheorem}.

\section{Applications}
The methods of this paper also have a number of useful consequences and applications. We record some of these here.

\begin{corollary}
If $\Hit_n(S)$ contains an $\O_K$-point, then there are infinitely many $\O_K$-points in $\Hit_n(S)$ which are Zariski-dense.
\end{corollary}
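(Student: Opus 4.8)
The plan is to deduce this from Theorem~\ref{maintheorem} by first producing a single Zariski dense $\O_K$-point on a Hitchin component and then spreading it over infinitely many using the mapping class group. First I would invoke Theorem~\ref{maintheorem} (or Theorem~\ref{maintheorem:numberfield} when $K\neq\Q$, which dispenses with the cover) to obtain, after passing to a finite sheeted cover that I will continue to denote $S$, a representation $\rho_1$ on $\Hit_n(S)$ whose image is Zariski dense in $\SL(n,\R)$. After conjugating we may assume $\rho_1(\pi_1(S))\subset\SL(n,\O_K)$, and by the lifting lemma of \S\ref{sec:HitchinRepsProperties} we may take $\rho_1$ to be an honest $\SL(n,\R)$-representation.

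The key mechanism is the action of the mapping class group $\mathrm{Mod}(S)$ on $\mathcal{R}_n(S)$ by $[\rho]\mapsto[\rho\circ\phi_*]$, which is well defined because $\phi_*$ is determined up to an inner automorphism of $\pi_1(S)$. This action preserves $\Hit_n(S)$: the map $\mathcal{T}(S)\to\mathcal{R}_n(S)$, $[\rho_0]\mapsto[\tau_n\circ\rho_0]$, is $\mathrm{Mod}(S)$-equivariant, so the image of Teichm\"uller space is $\mathrm{Mod}(S)$-invariant, and $\Hit_n(S)$ is the connected component of $\mathcal{R}_n(S)$ containing it, hence is carried to itself by the homeomorphism induced by each $\phi$. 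Moreover, for every $\phi$ the representation $\rho_1\circ\phi_*$ has the \emph{same} image as $\rho_1$, namely $\rho_1(\pi_1(S))\subset\SL(n,\O_K)$, because $\phi_*$ is an automorphism of $\pi_1(S)$; so $\rho_1\circ\phi_*$ is again an $\O_K$-point, and its Zariski closure is again $\SL(n,\R)$, so it is Zariski dense. Thus every point of the $\mathrm{Mod}(S)$-orbit of $[\rho_1]$ is a Zariski dense $\O_K$-point of $\Hit_n(S)$.

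The only real obstacle is to check that this orbit is infinite. Since $\rho_1$ is purely loxodromic on the Hitchin component it is an Anosov representation (Theorem~\ref{labourie:purelyloxodromic}, \cite{Labourie2006}), so its translation length function $\ell_{\rho_1}$ on conjugacy classes of $\pi_1(S)$ is proper: only finitely many classes have translation length below any given bound. Fixing a nonseparating simple closed curve $c$---whose $\mathrm{Mod}(S)$-orbit among isotopy classes is infinite---the numbers $\ell_{\rho_1\circ\phi_*}(c)=\ell_{\rho_1}(\phi_*c)$ therefore take infinitely many distinct values as $\phi$ varies, and since conjugate representations have the same length function this forces the points $[\rho_1\circ\phi_*]\in\Hit_n(S)$ to be pairwise distinct for infinitely many $\phi$. (Equivalently, one may simply cite the known fact that $\mathrm{Mod}(S)$ acts properly discontinuously on $\Hit_n(S)$, so that $[\rho_1]$ has finite stabilizer in the infinite group $\mathrm{Mod}(S)$; for $n=2$ this is the classical action on Teichm\"uller space.) This yields infinitely many Zariski dense $\O_K$-points in $\Hit_n(S)$. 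As in Theorem~\ref{maintheorem}, for $K=\Q$ these points a priori live on a finite cover of the original surface, whereas for $K\neq\Q$ no cover is required.
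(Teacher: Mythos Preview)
Your argument is correct, but it takes a genuinely different route from the paper's own proof. The paper does not use the mapping class group action at all; instead it observes that the bending matrix $A$ produced in the proof of Theorem~\ref{maintheorem} can be replaced by any of its powers $A^m$, $m\geq 1$, each of which still satisfies properties (a)--(c), and hence each $\rho^{A^m}$ is a Zariski dense $\O_K$-point on the Hitchin component. Pairwise non-conjugacy of the $\rho^{A^m}$ is then deduced by a Schur's lemma argument in the spirit of Lemma~\ref{largerclosure}: all of these bends agree on $\pi_1(S')$, whose image has Zariski closure containing a principal $\SL(2,\R)$, so any intertwiner between two of them would have to be scalar.

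The two approaches yield different strengths of conclusion. Your representations $\rho_1\circ\phi_*$ all have the \emph{same} image $\rho_1(\pi_1(S))\subset\SL(n,\O_K)$ and lie in a single $\mathrm{Mod}(S)$-orbit by construction. The paper's family $\{\rho^{A^m}\}$ has pairwise \emph{distinct} images (since $\rho^{A^m}(\alpha)=A^m\rho(\alpha)$ varies with $m$), and the paper explicitly notes that consequently these representations lie in distinct $\mathrm{Mod}(S)$-orbits; this is strictly more than what the corollary asserts, and strictly more than your method can see. On the other hand, your argument is softer: once a single Zariski dense $\O_K$-point exists, proper discontinuity of the $\mathrm{Mod}(S)$-action on $\Hit_n(S)$ immediately gives infinitely many, without revisiting any of the arithmetic in the bending construction.
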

\begin{proof}
This mainly is a consequence of the fact that the construction of the bending matrices $A$ rely on a number of arbitrary choices made, and any difference in these choices lead to different Zariski dense representations of $\pi_1(S)$. For instance, if $A$ is a bending matrix satisfying properties (a)--(c) used in the proof of Theorem \ref{maintheorem}, then so is $A^m$ for any integer $m\geq 1$, and hence $\rho^{A^m}$ are all alternative bends of $\rho$ with Zariski dense image. Pairwise non-conjugacy of this family of Zariski dense representations also follows by an argument similar to the one given in Lemma \ref{largerclosure} using Schur's lemma, leveraging the fact that the family of bends $\{\rho^{A^m}\}_{m=1}^\infty$ all agree on the subgroup of the cut subsurface, $\pi_1(S')$, whose Zariski closure contains a principal $\SL(2,\R)$.

Additionally, in the setting of \S\ref{section:numberfieldbend}, the construction works for \textit{any} nonseparating simple closed curve on $S$, and so for each curve, different Zariski dense surface groups may be produced. Notably, all these representations also lie in different mapping class group orbits in $\Hit_n(S)$ as they all have distinct images.
\end{proof}

\begin{corollary}
The Zariski dense representations in $\Hit_n(S)$ are (classically) dense in the Hitchin component.
\end{corollary}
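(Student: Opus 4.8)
The plan is to show that any point $\rho$ in the Hitchin component can be approximated in the classical topology by Zariski dense $\mathcal{O}_K$-points. Since we already know (by the previous corollaries and Theorem \ref{maintheorem}) that the Hitchin component contains Zariski dense $\mathcal{O}_K$-points whenever it contains any $\mathcal{O}_K$-point, the real content is a density statement: first, that rational (or $\mathcal{O}_K$-rational) points are classically dense in $\Hit_n(S)$, and second, that among nearby rational points one can find Zariski dense ones. I would begin by recalling that $\Hit_n(S)$ is a connected real-analytic manifold (in fact diffeomorphic to a ball of dimension $(2g-2)\dim\SL(n,\R)$), cut out inside the representation variety by polynomial equations with integer coefficients coming from the surface-group relator; hence it is a real algebraic set and its $\Q$-points are Zariski dense in it. Combined with the fact that $\Hit_n(S)$ is a smooth manifold, this gives that the $\Q$-points are also classically dense: near any $\rho$ one can find a smooth local parametrization and perturb the coordinates to rational values, then clear denominators to land in $\SL(n,\Q)$, and finally apply Lemma \ref{integralpower}-type reasoning (conjugating by a suitable $P\in\GL(n,\Q)$ and passing to the relevant ring) to arrange the image to lie in $\SL(n,\mathcal{O}_K)$ — although for the plain statement over $\Q$ one only needs $\Z$-points, so I would state the corollary for $K=\Q$ or note that it holds verbatim for any $K$ with class number one.

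The second step is to upgrade ``rational/integral and close to $\rho$'' to ``Zariski dense, rational/integral, and close to $\rho$.'' Here I would argue that the condition of being Zariski dense is \emph{generic}: by Guichard's classification (Theorem \ref{guichard}), the non-Zariski-dense locus of $\Hit_n(S)$ is the union of the (finitely many conjugacy classes of) loci where $\rho(\pi_1(S))$ preserves a nondegenerate bilinear form or is contained in a principal $\SL(2,\R)$ or a $G_2$, and each of these is a proper closed real-analytic subset of the connected manifold $\Hit_n(S)$ (proper because $\Hit_n(S)$ contains Fuchsian-bent representations that are Zariski dense — indeed the output of Theorem \ref{maintheorem} applied on a cover, pushed around, or more directly because the dimension count $\dim\SL(n,\R)(2g-2)$ exceeds the dimension of the sub-Hitchin-components for $\Sp$, $\SO$, etc.). A proper real-analytic subset of a connected manifold has empty interior, so its complement is open and dense; thus every neighborhood of $\rho$ meets the Zariski-dense locus in an open set, and inside that open set the $\Q$-points (respectively $\mathcal{O}_K$-points) are classically dense by Step 1. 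Intersecting gives a Zariski dense $\mathcal{O}_K$-point arbitrarily close to $\rho$.

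Alternatively — and this is the route I would actually favor since it reuses the paper's machinery rather than importing dimension counts — one can argue directly with the bending construction. Start from $\rho$ and first perturb it slightly to an $\mathcal{O}_K$-point $\rho_0$ (Step 1 above, which stays in any prescribed neighborhood). Then run the proof of Theorem \ref{maintheorem}: the bends are $\rho^{A_t}=\rho^{\exp(tX)}$ for $t\in[0,1]$, a continuous path from $\rho_0$ to the Zariski dense $\rho_0^A$. This path need not stay near $\rho$, but the key observation is that the bending matrix $A=\exp(X)$ can be replaced by $\exp(sX)$ for $s>0$ small without affecting any of properties (a)–(c): it still centralizes $\rho_0(\gamma)$, it is still the exponential of a diagonalizable element of $\mathfrak{sl}(n,\R)$, and — this is the one point needing care — as long as $\exp(sX)\notin G$ it still enlarges the Zariski closure by Lemma \ref{largerclosure}. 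Since $G$ is a proper closed subgroup and $\exp(sX)\notin G$ for $s=1$, the set of $s$ with $\exp(sX)\in G$ is a proper closed (indeed real-analytic) subset of $\R$, so we may pick arbitrarily small $s>0$ with $\exp(sX)\notin G$; for such $s$, $\rho_0^{\exp(sX)}$ is within any prescribed distance of $\rho_0$, hence of $\rho$. However, shrinking $s$ destroys integrality of $\exp(sX)$, so one must then follow the usual trick: take an integer power, $\exp(sX)$ is conjugate (via a fixed $P$) to something whose characteristic polynomial lies in $\mathcal{O}_K[t]$, so by Lemma \ref{integralpower} a power $A_s=\exp(sX)^{j}=\exp(jsX)$ has entries in $\mathcal{O}_K$ — but now $js$ is of size roughly $s\cdot(\text{order in }\SL(n,\mathcal{O}_K/(N)))$, which is \emph{not} small. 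The fix is to instead choose $s$ of the form $1/j$ where $j$ is itself one of the permissible integral powers, i.e. run the construction in the other order: produce the integral bending matrix $A=\exp(X)$ first, then observe $A^{1/j}$ for $j$ ranging over positive integers in the appropriate progression gives integral matrices $A_j$ with $A_j=\exp((1/j)X)\to I_n$, each still not in $G$ for all large $j$ (again because $\{j:\exp(X/j)\in G\}$ is finite), each still an exponential of a diagonalizable traceless matrix, and each still centralizing $\rho_0(\gamma)$. Then $\rho_0^{A_j}\to\rho_0$ classically while every $\rho_0^{A_j}$ is a Zariski dense $\mathcal{O}_K$-point on $\Hit_n(S)$ (or on a fixed finite cover, in the $K=\Q$ case where passing to a cover was needed; since the cover is fixed once and for all and $\Hit_n(\widetilde S)\to\Hit_n(S)$ is continuous, density downstairs follows from density upstairs).

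The main obstacle is precisely this tension between \emph{smallness of the bending parameter} (needed for classical closeness) and \emph{integrality of the bending matrix} (which forces the parameter to be a large integer multiple of a generator of a rank-one or higher lattice of commuting units). The cleanest resolution, as sketched, is the genericity argument of Step 2 — stratify $\Hit_n(S)$ by Zariski closure using Theorem \ref{guichard}, note each stratum below the top is a proper closed real-analytic (in particular nowhere dense) subset of the connected manifold $\Hit_n(S)$, so Zariski-density is an open dense condition, and then invoke Step 1 to populate the open dense Zariski-dense locus with $\mathcal{O}_K$-points. I would write the proof along these lines, relegating the bending-path remarks to a parenthetical alternative.
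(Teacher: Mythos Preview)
You have misread the statement. The corollary asserts only that the Zariski dense representations are classically dense in $\Hit_n(S)$; it says nothing about $\mathcal{O}_K$-points. Once integrality is dropped, the ``main obstacle'' you identify---the tension between smallness of the bending parameter and integrality of the bending matrix---simply vanishes, and with it most of your proposal. The paper's proof is exactly your ``alternative'' route stripped of the integrality concern: given any $\rho$ and any neighborhood, pick a nonseparating simple closed curve $\gamma$, build (as in \S\ref{section:numberfieldbend}) a diagonalizable $X\in\mathfrak{sl}(n,\R)$ centralizing $\rho(\gamma)$ with $\exp(X)\notin G$, and bend by $\exp(tX)$ for $t>0$ as small as you like. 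That is the entire argument.

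Several pieces of your proposal are either unnecessary or unjustified. Your Step~1 (classical density of $\Q$- or $\mathcal{O}_K$-points in $\Hit_n(S)$) is not established anywhere in the paper and is far from obvious: the existence of even a single integral point on a Hitchin component is a delicate question (indeed the paper's entire motivation), and your sketch---``perturb coordinates to rational values, then clear denominators''---does not explain why the perturbed tuple still satisfies the surface-group relator. Your $A^{1/j}$ trick does not work either: nothing guarantees that fractional powers of an integral matrix are integral. Your genericity argument in Step~2 (the non-Zariski-dense locus is a proper closed real-analytic subset, hence nowhere dense) is a legitimate alternative proof of the corollary as actually stated, and is arguably cleaner than the paper's bending argument since it avoids constructing anything; but it is a different route, and you would still need to justify properness of each stratum carefully.
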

\begin{proof}
This rests on the fact that the same techniques used in \S\ref{section:numberfieldbend} may be used to perform a bend. However, in this case, when we no longer require anything regarding integrality of the representation to be preserved, we may take bending matrices of the form $\exp(tX)$ for as small of a $t>0$ as we desire. Furthermore, these bends can be performed about any simple closed curve on $S$. 

Thus for any $\rho\in\Hit_n(S)$ and any open neighborhood of $\rho$, we may bend about any simple closed curve using a matrix $\exp(tX)$ for as small $t>0$ as necessary to produce Zariski dense representations near $\rho$.
\end{proof}

Other interesting questions in these directions still remain. For instance, whether or not the representations produced by this construction are non-commensurable (up to conjugacy) seems to be unknown. Part of this question is answered in \cite{longreidcocompact} for representations into cocompact lattices of $\SL(3,\R)$. There, the authors deduce pairwise non-commensurability up to conjugacy of an infinite family of thin surface subgroups by proving that the surface subgroups in this family had projectively distinct limit sets. 

In this context, the associated limit set may be studied in terms of the hyperconvex Frenet curve $\xi_\rho:\partial_\infty\pi_1(S)\to\operatorname{Flag}(\R^n)$ associated to any Hitchin representation $\rho$, first introduced in \cite{Labourie2006}. At the moment, we can show that the family of bends $\{\rho^{A^m}\}_{m=1}^\infty$ produces representations whose flag \textit{curves} are distinct, i.e. the functions $\xi_{\rho^{A^j}}$ and $\xi_{\rho^{A^k}}$ are distinct whenever $j\neq k$. Full non-commensurability up to conjugacy would require showing that these flag curves have distinct \textit{images}, up to the action of $\SL(n,\R)$, which is more subtle and seems to be currently unknown.

We close with some known examples of representations this bending construction can be applied to.

\begin{enumerate}[label=(\arabic*)]
    \item The $(3,4,4)$-triangle group, $\Delta(3,4,4)$, is the fundamental group of the orbifold with signature $S^2(3,4,4)$. Using the presentation $\Delta(3,4,4)=\langle \alpha,\beta,\gamma\,|\,\alpha^3= \beta^4= \gamma^4=\alpha \beta \gamma=1\rangle$, the representation realizing the (unique up to conjugacy) hyperbolic structure on this orbifold is given by:
    \begin{align*}
    \rho(\alpha)&=\begin{pmatrix}
    0 & -1\\1 & \phantom{-}1\end{pmatrix},\\
    \rho(\beta)&=\begin{pmatrix}0 & -1-\sqrt{2}\\-1+\sqrt{2} & \sqrt{2}\end{pmatrix},\\
    \rho(\gamma)&=\begin{pmatrix}1-\sqrt{2} & -\sqrt{2}\\-1+\sqrt{2} & -1\end{pmatrix}.
    \end{align*}
    As this comes from a hyperbolic structure on $\Delta(3,4,4)$, $\rho$ represents a point in $\mathcal{T}(S^2(3,4,4))$. In \cite{longthis:2020} (Theorem $2.1$), the authors show that, when composing with the irreducible representation $\tau_n:\SL(2,\R)\to\SL(n,\R)$, $\tau_n\rho$ can be conjugated to have image inside $\SL(n,\Z)$ when $n$ is odd, or $\SL(n,\Z[\sqrt{2}])$ when $n$ is even (note: $\Z[\sqrt{2}]=\O_{\Q(\sqrt{2})}$ is a PID).
    
    $\Delta(3,4,4)$ contains a torsion-free subgroup of finite-index which is the fundamental group of some closed surface: $\pi_1(S)\leqslant\Delta(3,4,4)$. Restricting this representation of $\Delta(3,4,4)$ to one of $\pi_1(S)$, we see that Hitchin component of $S$ therefore contains a $\Z$-point or a $\Z[\sqrt{2}]$-point depending on the parity of $n$, and so can be bent to being Zariski dense.
    \item Examples of $\Q$-integral even-dimensional Hitchin representations are largely unknown. At the moment of writing, such examples are only known when $n=4$. Long and Thistlethwaite provided such examples of Hitchin representations into $\SL(4,\Z)$ in \cite{longthis:2018}: one coming from an infinite family of representations of the triangle group $\Delta(3,3,4)$, along with one of the triangle group $\Delta(2,4,5)$. These methods then provide an alternate proof of the results of the aforementioned paper, giving Zariski dense surface subgroups in $\SL(4,\Z)$, but additional examples for $\SL(2k,\Z)$ when $k>2$ are still unknown.
\end{enumerate}
\bibliographystyle{amsalpha}
\bibliography{references.bib}
\end{document}